\def\BibTeX{{\rm B\kern-.05em{\sc i\kern-.025em b}\kern-.08em
    T\kern-.1667em\lower.7ex\hbox{E}\kern-.125emX}}
\DeclareMathOperator*{\argmin}{arg\,min}
\newtheorem{definition}{Definition}
\newtheorem{corollary}{Corollary}
\newtheorem{remark}{Remark}
\newtheorem{assumption}{Assumption}
\newtheorem{proposition}{Proposition}
\newtheorem{ex}{Example}
\newtheorem{theorem}{Theorem}
\newcommand{\R}{\mathbb{R}}
\newcommand{\ba}{\begin{array}}
\newcommand{\ea}{\end{array}}
\newcommand{\be}{\begin{equation}}
\newcommand{\ee}{\end{equation}}
\newcommand{\E}{\mathbb{E}}
\renewcommand{\P}{\mathbb{P}}
\newcommand{\ds}{\displaystyle}
\newcommand{\mc}{\mathcal}
\newcommand{\supp}{\text{supp}}
\newcommand{\mb}{\mathbf}
\newcommand{\1}{\mathbf 1}
\newcommand{\0}{\mathbf 0}
\newcommand{\ups}{\upsilon}
\begin{document}

\title{On Optimality of Private Information in Bayesian Routing Games}
\author{Alexia Ambrogio, \IEEEmembership{Student Member, IEEE}, Leonardo Cianfanelli, \IEEEmembership{Member, IEEE}, and Giacomo Como, \IEEEmembership{Member, IEEE}
\thanks{Preliminary results for the special cases of transportation networks consisting of two parallel links and $n$ parallel links were presented at \cite{cianfanelli2023information} and \cite{ambrogio2024information}, respectively.}
\thanks{This work was partially supported by the Research Project PRIN 2022 ``Extracting Essential Information and Dynamics from Complex Networks'' (Grant Agreement number 2022MBC2EZ) funded by the Italian Ministry of University and Research.}}

\maketitle

\begin{abstract}
We study an information design problem in transportation networks, in the presence of a random state that affects the travel times on the links. An omniscient system planner ---aiming at reducing congestion--- observes the network state realization and sends private messages to the users ---who share a common prior on the network state but do not observe it directly--- in order to nudge them towards a socially desirable behavior. The desired effect of these private signals is to correlate the users' selfish decisions with the network state and align the resulting Bayesian Wardrop equilibrium with the system optimum flow. Our main contribution is to provide sufficient and necessary conditions under which optimality may be achieved by a fair private signal policy in transportation networks with injective link-path incidence matrix and affine travel time functions.
\end{abstract}

\begin{IEEEkeywords} Transportation networks, information design, Bayesian equilibrium, private signals. 
\end{IEEEkeywords}

\section{Introduction}
\label{sec:intro}

Reducing congestion in transportation networks is a main goal to decrease pollution and improve the quality of life in urban areas. To address this challenge, it is important to understand how individuals make routing decisions in response to the network structure, the information they have on the state of the network, and the other users' behavior. Such understanding lays the foundations for the system planner to effectively design influence policies to stir the users' decisions and nudge them towards socially desirable behaviors. In fact, recent technologies have increased the range of opportunities and, in particular, they have opened up the possibility to design personalized routing recommendation systems.  

Mathematical models that capture the strategic user behavior, such as routing games, provide a powerful tool to describe strategic decision-making and plan interventions \cite{wardrop1952road,beckmann1956studies}. Routing games model the transportation network by a directed multigraph, endowing each link of the network with a delay function that models the travel time across the road as a non-decreasing function of the flow that captures congestion effects. The large number of users is modeled by a non-atomic set of players, in the spirit of population games \cite{sandholm2010population} and the user behavior is captured by the notion of Wardrop equilibrium, a flow such that no one user has a strict incentive to unilaterally deviate to a different route.  

Since Wardrop equilibria arise from users trying to minimize their personal cost, instead of coordinating with other users to minimize the system cost, they are typically suboptimal in terms of total travel time spent on the network. Such inefficiency may be measured by the Price of Anarchy, defined as the ratio between the total travel time at the worst equilibrium and the one at the optimal flow \cite{roughgarden2002price}. Several approaches have been adopted in the literature to optimize the quality of the equilibria. Two classical ones involve interventions on the network structure (known as network design problem \cite{cianfanelli2023optimal,farahani2013review,roughgarden2006severity,marcotte1986network}) and the design of incentive mechanisms such as tolls, which instead influence the user behavior without modifying the infrastructure \cite{cole2006much,Como.Maggistro:22,fleischer2004tolls}.

In the past years, Traveler Information Systems have been proposed as additional solution to reduce congestion, with the implicit assumption that providing users with real-time information would bring beneficial effects \cite{srinivasan2000modeling}. However, it is a well established fact in the literature that providing users with more information can in fact hurt the system performance and that more informed users may be outperformed by non-informed ones  \cite{acemoglu2018informational,wu2021value}. 

Based on these observations, several works in the literature have started using information design theory to understand what type of information should be given to the users in order to maximize the system performance \cite{das2017reducing,tavafoghi2017informational}. The setting of information design is the following. There is a random network state (modeling, e.g., accidents, uncertain whether conditions, roadworks) affecting the delay functions, and a central planner who has access to its realization and has an informational advantage with respect to the users, as the latter know the prior distribution of the network state, but do not observe its realization. Based on the observation of the network state, the planner discloses some information to the users according to a certain rule. Since public information, e.g., conveyed via road signals, is known to be inefficient in routing games \cite{massicot2022competitive,das2017reducing,tavafoghi2017informational,zhu2022information}, we focus on private signaling sent through the users' personal devices. In this context, the planner selects a finite set of messages and a private signal policy, which associates to each player a message that depends on the network state realization and possibly on other random variables. By doing so, the planner reshapes the posterior beliefs of the users, who select their paths based on the message that they receive, resulting in a so-called \emph{Bayesian Wardrop equilibrium} (BWE), a flow that is correlated with the network state through the message that the users receive. The ultimate goal of the planner is then to design a set of messages and the private signal policy whose equilibrium achieves the minimum cost.

%The role of information is widely studied in the literature of routing games. \cite{kamenica2011bayesian} analyzes the optimal message between the central planner and one single user. \cite{acemoglu2018informational} shows that the travel times may decrease when a fraction of users is not aware of some roads of the network, while \cite{wu2017informational,wu2021value} show that too much information may hurt the system. In general, \cite{das2017reducing} and \cite{massicot2022competitive} prove that public information policies are in general suboptimal with respect to private ones, therefore in this paper we consider private information provision, motivated also by the fact that nowadays users have access to information through their personal devices, for instance as smartphones.

%%%% cosa si è fatto e si fa in letteratura
In the literature, information design in Bayesian routing games have been studied along several directions.
The problem has been first proposed in \cite{das2017reducing} and analyzed in \cite{tavafoghi2017informational} and \cite{wu2019information} on two parallel links with affine delay functions.
Dynamical information provision is studied in \cite{dughmi2016algorithmic,meigs2020optimal,tavafoghi2019strategic, ouyang2024anapproach}. The information design problem when only some users have access to information is studied in
\cite{zhu2022information} from an algorithmic perspective with polynomial delay functions and arbitrary network topologies, while \cite{doval2024constrained} provides a solution strategy to solve a constrained information design problem. Other papers consider limited rationality as in \cite{gould2023rationality,feng2024rationality}.
Bilateral information asymmetry between the central planner and the users has been considered in \cite{sezer2023robust}, while the co-presence of two populations with different access to the information has been studied in \cite{liu2016effects}. 
Moreover, \cite{massicot2019public} characterize optimal public signaling and \cite{massicot2024OnNetwork} analyzes boundedly rational user equilibria in a context of public signals. The design of information with unknown demand is studied in \cite{griesbach2024information}, while recently the coupling between information and tolls was considered in \cite{ferguson2024information}. 

In this paper, we focus on \emph{deterministic and fair} private signal policies, namely, such that the probability of receiving each message is the same for all agents traveling from the same origin node the same destination node and such that the fraction of users that receive each message is a deterministic function (called \emph{signaling rule}) of the network state realization. A more general setting that generalizes deterministic policies involves an additional randomization step (sometimes called \emph{garbling}), that is, for every network state realization, the planner may choose at random the fraction of users that receive each message \cite{massicot2022competitive,koessler2024correlated,massicot2025strategic}, but this step is unnecessary for our purposes, as discussed later.
We show that, for arbitrary network topologies and delay functions, the BWE corresponding to deterministic fair private signal policies depends on the signal policy only via the signaling rule, and prove that an analog of the revelation principle for finite strategic games (see \cite{bergemann2019information,myerson1982optimal}) holds true in our non-atomic setting. The implication of the revelation principle is that it is without loss of generality for the planner to restrict to direct obedient signaling rules, defined as signaling rules such that messages coincide with path recommendations that are convenient to follow for the users (Theorem \ref{rv}). We then build on this result to establish sufficient and necessary conditions under which the planner can induce the system-optimum flow as BWE for deterministic fair private signal policy. These conditions hold true for networks with affine delay functions and such that they admit a bijection between link flows and path flows, generalizing the results of \cite{cianfanelli2023information,ambrogio2024information} to non-parallel networks (Theorem \ref{thm:opt_gen}). Since these conditions are hard to interpret, we then provide more intuitive sufficient conditions for optimality, that become more restrictive as the class of network topology becomes more general.

The paper is organized as follows.
In Section \ref{sec:2} we define the transportation network, the notion of information and formulate the problem. Section \ref{sec:pub_policy} establishes some preliminary considerations and motivates the problem. Section \ref{sec:results} provides our main results. Section \ref{sec:examples} illustrates some examples, while Section \ref{sec:conclusion} summarizes the contribution and future research lines.

\subsection{Notation}
Given a finite set $\mc A$, we let $\R^{\mc A}$ denote the set of real valued vectors, whose entries are indexed by the elements of $\mc A$. Given a vector $x$, we let $x'$ denote its transpose and $[x]$ denote the diagonal matrix with $[x]_{ii} = x_i$. $\mathbf 1$, $\mathbf 0$,  $\mathbf I$ and $\delta^{(i)}$ denote the vector of all ones, the vector of all zeros, the identity matrix and the vector with $1$ in the $i$-th entry $\delta_i^{(i)}$ and $0$ in all other positions, whose size may be deduced from the context.
%We let $\Delta_{\mathcal{A}}$ denote the simplex on $\mathcal{A}$, i.e.
%$$
%\Delta_{\mathcal{A}} = \{x \in \R_+^{\mathcal{A}}: \mathbf 1'x = 1\}.
%$$
Given a random variable $X$ in $\R^n$, its support is the set of values that the random variable can take, i.e.,
$$
\supp(X) = \{x \in \R^n: \P(B(x,\epsilon))>0 \ \forall \epsilon > 0\},
$$
where $B(x,\epsilon)$ is the open ball centered in $x$ with radius $\epsilon$.

%Given an element $x$ in $\R$, we let
%$$
%[x]_0^1=\max\{0,\min\{x,1\}\} = \begin{cases}
%	0 \ &\text{if} \ x < 0 \\
%	x \ &\text{if} \ x \in [0,1] \\
%	1 \ &\text{if} \ x > 1. \\
%\end{cases}
%$$
%Given an element $x$ in $\R^+$, we let
%$$
%x_{mod \ 1}=x-\lfloor x \rfloor
%$$
%where $\lfloor x \rfloor = \max\{k \in \Z : k \leq x\}$.
%Then, given a set $X$ we define $X_{mod \ 1}$ the union of all the elements of $X$ to whom the operation $mod \ 1$ has been applied.

\section{Problem formulation and preliminary results}\label{sec:2}

%\begin{figure}[!t]
%\centerline{\includegraphics[width=\columnwidth]{}}
%\caption{}
%\label{fig1}
%\end{figure}

In this section, we define the transportation network and the notions of information and of Bayesian Wardrop equilibrium. We then establish some preliminary results, formulate our problem and provide a motivating example.

\subsection{Transportation network model}
\label{sec:structure}

The \emph{transportation network} is modeled as a directed multigraph $\mathcal{G}=\left(\mathcal{V},\mathcal{E}, \sigma, \xi \right)$, where $\mathcal{V}$ is a non-empty finite set of nodes, $\mc E$ is a finite set of links, and $\sigma, \xi : \mathcal{E} \rightarrow \mathcal{V}$ are two maps that associate to every link $e$ in $\mc E$ its tail node $\sigma(e)$  and head node $\xi(e)$, respectively. Throughout the paper, we shall assume that there are no self-loops, i.e., that $\sigma(e)\ne\xi(e)$ for every link $e$ in $\mc E$. 
%The transportation network topology can then be equivalently characterized by the node-link incidence matrix $B$ in $\R^{\mathcal V \times \mathcal E}$, with entries 
%$$B_{ie} =\left\{\ba{lcl} 1 &\se &\sigma(e)=i\\-1&\se&\xi(e)=i\\0&\se&\sigma(e)\ne i\ne\xi(e)\,,\ea\right.$$ 
%for every node $i$ in $\mc V$ and link $e$ in $\mc E$ \tcr{non sono sicuro che ci serva $B$}. 

For two distinct nodes $o\ne d$ in $\mc V$, and a positive integer $l$, a length-$l$ $(o,d)$-path is an $l$-tuple of links $\gamma = \left(e_1,...,e_l\right)$ in $\mc E^l$ such that 
$\sigma\left(e_1\right)=o$, $\xi\left(e_l\right)=d$, 
$$\xi\left(e_{i}\right)=\sigma\left(e_{i+1}\right)\,,\qquad 1\le i<l\,,$$ 
and 
$$\sigma\left(e_i\right)\ne\sigma(e_j)\,,\qquad 1\le i<j\le l\,.$$
The set of all $(o,d)$-paths of any length $l\ge1$ is denoted by $\Gamma_{od}$. Notice that the set $\Gamma_{od}$ is necessarily finite.
Node $d$ is said to be reachable in $\mc G$ from node $o$ if there exists at least one $(o,d)$-path $\gamma$, i.e., if $\Gamma_{od}$ is nonempty. We shall denote the set of all paths in $\mc G$ from any origin node $o$ in $\mc V$ to any destination $d$ in $\mc V\setminus\{o\}$ by $\Gamma=\bigcup_{o\ne d}\Gamma_{od}$. 
We then define the link-path incidence matrix $A$ in $\{0,1\}^{\mathcal E \times \Gamma}$, with entries $A_{e\gamma} = 1$ if link $e$ belongs to path $\gamma$ and $A_{e\gamma} = 0$ otherwise.

A feasible throughput is a nonnegative square matrix $\upsilon$ in $\R_+^{\mc V\times\mc V}$ such that $\upsilon_{ii}=0$ for every $i$ in $\mc V$, i.e., $\upsilon$ has zero diagonal, and $\upsilon_{od}=0$ for every two nodes $o$ and $d$ such that $d$ is not reachable from $o$ in $\mc G$. For a feasible throughput $\upsilon$, the set of compatible path flow assignments is 
$$
\mc Z_{\upsilon} = \left\{z \in \R^{\Gamma}_+: \sum\nolimits_{\gamma \in \Gamma_{od}} z_\gamma = v_\gamma\,, \ \forall o,d\in\mc V\right\}\,,
$$ 
while 
$$
\mc F_{\upsilon} = \left\{f \in \R^\mc E_+: f = Az\,, \ z \in \mc Z_{\upsilon}\right\}\,,
$$
denotes the set of compatible link flow vectors.
%The exogeneous inflow $\nu$ in $\R^{\mc V}$ is then defined by $$\nu = \sum_{\gamma \in \Gamma} v_{\gamma}(\delta^{(o_\gamma)}-\delta^{(d_\gamma)}),$$ and the set of flow vectors that can be induced by the exogeneous inflow is 
%$$
% \mc F_{\ups} = \{f \in \R^{\mathcal E}_+: Bf=\nu\}.
%$$
%be the set of unitary $o$-$d$ flows, where a flow describes how users are distributed across links.

The \emph{network state} is described by a random vector $\Theta$ defined on a complete probability space $(\Omega, \mathcal{A}, \P)$ and taking values in $\R_+^{\mathcal E}$. We shall assume that the entries of the network state have bounded first and second moment, i.e., 
$$\E[\Theta_e]<+\infty\,,\qquad \E[\Theta_e^2]<+\infty\,, \qquad \forall e \in \mc E\,.$$ 
We do not make any assumption on the independence of the entries of $\Theta$, while the case where they are independent can be recovered from our framework as a special case.  

Every link $e$ in $\mc E$ is endowed with a measurable delay function $\tau_e : \R_+ \times \R_+ \to \R_+$ that returns the travel time $\tau_e(\theta_e,f_e)$ on that link as a function of the realization of the $e$-th entry $\theta_e$ of the network state and of the flow $f_e$ on that link. In order to consider congestion effects we shall assume that $\tau_e(\theta_e, f_e)$ is continuously differentiable, strictly increasing
% per avere unicità equilibrio 
and convex in $f_e$ for every $\theta_e$.

A network flow is a function $f : \R^{\mc E}_+ \to  \mc F_{\ups}$ that assigns to every realization of the network state a link flow in $ \mc F_{\ups}$.
The \emph{system cost} $C(f)$ of a network flow $f : \R^{\mc E}_+ \to  \mc F_{\ups}$ is the overall expected travel time spent on the network, i.e.,
\begin{equation}\label{system_cost}
\begin{array}{rcl}
\ds C(f) &= &\ds \E\Big[\sum_{e \in \mathcal E} f_e(\Theta) \tau_e (\Theta_e, f_e(\Theta))\Big] \\[14pt]
&=&\ds \int_{\R_{+}^{\mc E}} \sum_{e \in \mathcal E} f_e(\theta) \tau_e (\theta_e, f_e(\theta)) d\P(\theta)\,.
\end{array} 
\end{equation}
It is then natural to define the \emph{system-optimum flow} as follows. 
Let
\be\label{eq:system_opt_full}
f^*(\theta) = \argmin_{x \in \mc F_\ups} \sum_{e \in \mc E} x_e \tau_e(\theta_e,x_e)\,, \quad \forall \theta \in \R_+^{\mc E}\,,
\ee 
%is the vector in $\mc F$ that minimizes $\sum_{e \in \mathcal E} f_e\left(\theta\right) \tau_e (\theta_e, f_e(\theta))$, 
whose uniqueness follows from the strict monotonicity and convexity of the delay functions with respect to the link flow.
Then, observe that 
$$
	C(f^*) = \min_{f: \R_+^{\mc E} \to  \mc F_{\ups}} C(f)\,.
$$
%\begin{remark}
%Observe that the system-optimum $f^*$ is unique and independent of the prior distribution of the network state. Indeed, 
%For every $\theta$ in $\R_+^{\mc E}$,
%$$
%f^*(\theta) = \argmin_{x \in \mc F} \sum_{e \in \mc E} x_e \tau_e(\theta_e,x_e)\,,
%$$
%%is the vector in $\mc F$ that minimizes $\sum_{e \in \mathcal E} f_e\left(\theta\right) \tau_e (\theta_e, f_e(\theta))$, 
%whose uniqueness follows from the strict monotonicity and convexity of the delay functions with respect to the link flow.
%\end{remark}
The system-optimum can be interpreted as the network flow that an omniscient planner would impose on the network if the user behavior could be chosen in a centralized manner as a function of the network state. However, the transportation network users are strategic and aim to minimize their own expected travel cost given the information they have on the network state. In the next sections, we shall illustrate how information determines the user behavior and therefore the resulting equilibrium flow.

\subsection{Information structure}
\label{sec:behavior}
We assume that the omniscient planner can observe the realization of the network state $\Theta$. Based on this observation, she sends private signals to the users in order to influence their belief on the network state and the resulting equilibrium flow, with the goal of minimizing the system cost at the equilibrium. 

First, for a given feasible throughput $\ups$, we introduce the \emph{user set}
$$
\mc U_{\ups} = \{(o,d,u): o,d \in \mc V\,, u \in [0,\ups_{od})\}\,.
$$
Every user $(o,d,u)$ in $\mc U_{\ups}$ is then identified by its origin node $o$, destination node $d$ and a point $u$ in the interval $[0,\ups_{od})$. Observe that, whenever $\ups_{od}=0$ ---hence, in particular, when $d$ is not reachable from $o$--- there are no users with origin node $o$ and destination node $d$. On the other hand, when $d$ is reachable from $o$, there is a continuum of users with origin node $o$ and destination node $d$ of Lebesgue measure $\ups_{od}$.

\begin{definition}
Consider a finite \emph{message set} $\mc M$. 
%\begin{itemize}
%	\item the \emph{user set} $$\mc U = \prod_{o,d \in \mc V} \mc U_{od}\,,$$
%	where $\mc U_{od} = [0,\ups_{od}]$ is the set of users traveling from node $o$ to node $d$;
%	\item a finite \emph{message set} $\mc M$;
%\item 
A \emph{(private) signal policy} is a map $S: \mc U_{\ups}\times \Omega \to \mc M$ that assigns to each user a message that depends on the network state and possibly on other random variables. A private signal policy $S$ is referred to as \emph{fair} if, for every realization $\theta$ of the network state $\Theta$ in $\supp(\Theta)$, the conditional distribution of the message received by any two users with the same origin/destination pair is the same, i.e., 
$$\P(S((o,d,u),\omega)\!=\!m|\Theta\!=\!\theta)\!=\!\P(S((o,d,u'),\omega)\!=\!m|\Theta\!=\!\theta)\,,$$
for every $o$ and $d$ in $\mc V$ and $u,u'$ in $[0,\ups_{od})$. 
\end{definition}

Throughout this paper, we shall always assume that the messages sent to the users are drawn according to a fair signal policy.
To every fair signal policy  $S: \mc U_{\ups} \times \Omega \to \mc M$, we can then associate a map $\pi : \R_+^{\mc E} \to \R_+^{\mc V \times \mc V \times \mc M}$, to be referred as the associated \emph{(signaling) rule}, defined by 
\be\label{eq:signaling_rule} \pi_m^{od}(\theta) =\P(S((o,d,u),\omega)=m|\Theta=\theta)\,,\ee
i.e., 
$\pi_m^{od}(\theta)$ is the conditional probability that any user with origin node $o$ and destination node $d$ receives message $m$ given that $\Theta = \theta$. The set of signaling rules associated to all fair signal policies with given set of messages $\mc M$ is defined by
$$
\Pi_{\mc M}\!=\! \left\{\pi: \R_+^{\mc E} \to \R_+^{\mc V \times \mc V \times \mc M}:\!\! \sum_{m \in \mc M}\! \pi_m^{od}(\theta) \! = \! 1\,, \ \forall o,d \in \mc V\right\}.
$$ 

We assume that all users know the prior distribution of the network state, the signaling rule associated to the fair signal policy, and their private message, but cannot observe the realization of the random variables. Once a user receives her message, she updates her belief on the network state as established by the following result. 
\begin{proposition}\label{prp:bayes}
	Consider a message set $\mc M$ and a fair signal policy $S$ with associated signaling rule $\pi$ in $\Pi_\mc M$. Then, for every two nodes $o,d$ in $\mc V$, the posterior belief on the network state of every user $(o,d,u)$ that receives message $m$ is
	\be\label{eq:posterior_prp}
	d\P_m^{od}(\theta) = \ds \frac{\pi^{od}_m(\theta) d\P(\theta)}{\int_{\R_+^{\mc E}} \pi^{od}_m(\theta') d\P(\theta')}\,,
	\ee
for every $m$ in $\mc M$ such that $\int_{\R_+^{\mc E}} \pi_m^{od}(\theta)d\P(\theta)>0$.
\end{proposition}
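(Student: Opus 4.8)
The plan is to derive \eqref{eq:posterior_prp} as an instance of Bayes' rule for a discrete observation of a continuously distributed parameter, the only point deserving care being that $\{\Theta=\theta\}$ typically has zero probability, so that the quantities $\pi^{od}_m(\theta)$ appearing in \eqref{eq:signaling_rule} must be read as a regular conditional distribution rather than as an elementary conditional probability.

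First I would fix an origin/destination pair $(o,d)$ with $\ups_{od}>0$ and a user $(o,d,u)$ in $\mc U_\ups$, and observe that, since $\mc M$ is finite, the random variable $\omega\mapsto S((o,d,u),\omega)$ admits a regular conditional distribution given $\Theta$; by \eqref{eq:signaling_rule} this kernel is $\theta\mapsto(\pi^{od}_m(\theta))_{m\in\mc M}$, which by fairness does not depend on $u$ and is measurable in $\theta$ because $\pi\in\Pi_\mc M$. Consequently, the joint law of $(\Theta,S((o,d,u),\omega))$ on $\R_+^{\mc E}\times\mc M$ disintegrates as
\[
\E\big[g(\Theta,S((o,d,u),\omega))\big]=\int_{\R_+^{\mc E}}\sum_{m\in\mc M}g(\theta,m)\,\pi^{od}_m(\theta)\,d\P(\theta)
\]
for every bounded measurable $g:\R_+^{\mc E}\times\mc M\to\R$. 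Specializing to $g(\theta,m')=\1_B(\theta)\1_{\{m\}}(m')$ for an arbitrary Borel set $B\subseteq\R_+^{\mc E}$ gives $\P(\Theta\in B,\,S((o,d,u),\omega)=m)=\int_B\pi^{od}_m(\theta)\,d\P(\theta)$, while taking $B=\R_+^{\mc E}$ gives the marginal $\P(S((o,d,u),\omega)=m)=\int_{\R_+^{\mc E}}\pi^{od}_m(\theta)\,d\P(\theta)$, which is strictly positive precisely under the stated hypothesis.

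Since we are now conditioning on an event of strictly positive probability, the elementary definition of conditional probability yields, for every Borel $B\subseteq\R_+^{\mc E}$,
\[
\P\big(\Theta\in B\,\big|\,S((o,d,u),\omega)=m\big)=\frac{\int_B\pi^{od}_m(\theta)\,d\P(\theta)}{\int_{\R_+^{\mc E}}\pi^{od}_m(\theta)\,d\P(\theta)}\,,
\]
which is exactly the assertion that the posterior law $\P^{od}_m$ of $\Theta$ has the Radon--Nikodym representation \eqref{eq:posterior_prp} with respect to the prior $\P$; since the right-hand side is independent of $u$, all users with origin $o$ and destination $d$ who receive message $m$ share this belief. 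I do not anticipate a genuine obstacle here: the only step requiring attention is the identification of the signaling rule with a conditional-probability kernel and the associated disintegration and measurability bookkeeping, which is standard for a finite observation space, after which the statement reduces to the textbook Bayes computation.
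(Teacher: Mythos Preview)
Your proof is correct and follows the same approach as the paper: the paper's proof is a one-line invocation of \eqref{eq:signaling_rule} and Bayes' theorem, and your argument is precisely a careful unpacking of that invocation, handling the regular-conditional-probability and disintegration details that the paper leaves implicit.
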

\begin{proof}
	The statement follows from \eqref{eq:signaling_rule} and 
	from Bayes' theorem.
\end{proof}\medskip

While a fair signal policy $S$ induces a unique signaling rule $\pi$, many signal policies $S: \mc U_{\ups}\times \Omega \to \mc M$ are compatible with a rule $\pi$ in $\Pi_{\mc M}$.
\begin{remark}\label{rem:posterior}
	Note by Proposition \ref{prp:bayes} that, given a fair signal policy $S$, the posterior beliefs depends on $S$ via the associated signaling rule $\pi$. Hence, all fair signal policies $S$ with same corresponding signaling rule $\pi$ induce the same posterior beliefs.
\end{remark}

One canonical way to construct a fair signal policy $S$ given a signaling rule $\pi$ is the following.

\begin{ex}\label{ex:signal}
Consider a finite set of messages $\mc M$, a map $\pi$ in $\Pi_\mc M$ and a random matrix $\Psi$ defined on the probability space $(\Omega, \mathcal{A}, \P)$, independent of the network state $\Theta$ and uniformly distributed on the hypercube $[0,1]^{\mathcal V \times \mc V}$. Then, let the message for each user $(o,d,u)$ in $\mc U_\ups$ be $S((o,d,u),(\theta,\psi)) = m$ if and only if
\begin{equation}\label{eq:message}
	\sum_{j=1}^{m-1}\pi_j^{od}(\theta) \le \psi_{od}+\frac{u}{\ups_{od}}-\left\lfloor \psi_{od}+\frac{u}{\ups_{od}} \right\rfloor < \sum_{j=1}^m \pi_j^{od}(\theta)\,.
\end{equation} 
\end{ex}\smallskip

A property of the fair private signal policy defined in Example \ref{ex:signal} is that the fraction of users that receive each message is a deterministic function of the network state and does not depend on any other randomness. The class of fair private signal policies with this property will be referred to as \emph{deterministic}. However, we remark that the one defined in Example \ref{ex:signal} is not the only deterministic fair private signal policy with associated signaling rule $\pi$. Moreover, not all fair private signaling rules are deterministic. In fact, in principle the definition of signaling rule given in \eqref{eq:signaling_rule} allows for a further randomization, that is, for every realization of the network state, the fraction of users that receive each message might depend on additional random variables. These policies are sometimes referred to in the literature as \emph{garbled policies} \cite{massicot2022competitive,massicot2025strategic,koessler2024correlated,koessler2025full}. Throughout this paper, we restrict our analysis to deterministic fair signal policies, as a further randomization is unnecessary for our purposes, as discussed later.
%assume that the planner designs a signaling rule $\pi$ and that the associated fair private signal policy $S$ is defined by Example \ref{ex:signal} accordingly and is therefore deterministic.

\subsection{Bayesian user equilibrium}
%Let $\P^{\gamma}_m$ denote the posterior belief on the network state $\Theta$ of $\gamma$-users that receive message $m$.

%\begin{remark}
%The key for our results is that, given two nodes $o,d$ in $\mc V$, all users $(o,d,u)$ in $\mc U$ that receive message $m$ have equal posterior belief \eqref{eq:posterior_prp}. This in turn follows from the fact that the signal is fair, namely, for every two nodes $o,d$ in $\mc V$, all users in $(o,d,u)$ in $\mc U$ have equal probability of receiving each message $m$, as established by Proposition \ref{prp:bayes}(i). In fact, a consequence of Proposition \ref{prp:bayes}(ii) is that the outcome of a private signal depends solely on $\pi$, which determines the mass of users that receive each message and the posterior belief that the users form about the network state. For this reason, in the rest of the paper we shall focus on the rule $\pi$ to indicate the object that the planner aims to optimize. However, we remark that this is not the only fair private signal that induces Lebesgue measure $\pi$ (cf. Remark \ref{remark:Lebesgue}) and that our results hold true for all private signals that satisfy Proposition \ref{prp:bayes}(i).
%\end{remark}

Proposition \ref{prp:bayes} illustrates how the users update their beliefs on the network state after receiving a message drawn according to a fair signal policy with known associated signaling rule. After forming their posterior belief on the network state, the users choose a path with minimum expected cost. The cost of a path is defined as sum of the delay functions of the links that compose the path, i.e.,
\begin{equation}\label{eq:cost_path}
	c_\gamma (\theta, f) = \sum_{e \in \mathcal E} A_{e\gamma} \tau_e(\theta_e, f_e)\,, \! \quad \forall \theta \in \R_+^{\mc E}, \ \forall \gamma \in \Gamma.
\end{equation}
To formalize the notion of equilibrium flow, we introduce $y$ in $\R_+^{\mc V \times \mc V \times \Gamma \times \mathcal M}$, whose element $y_{\gamma m}^{od}$ indicates, for every two nodes $o,d$ in $\mc V$, the fraction of users with origin node $o$ and destination node $d$ that choose path $\gamma$ conditioned on the fact that they received message $m$. By construction, $y$ must satisfy the two constraints
\be\label{eq:con:y}
\ba{rcl}
\ds y^{od}_{\gamma m} & = & 0\,, \quad \ds \forall \gamma \notin \Gamma_{od}\,, \\[8pt]
\ds \sum_{\gamma \in \Gamma} y^{od}_{\gamma m} & = & \ds 1\,,
\ea\ee 
for every two nodes $o,d$ in $\mc V$ and message $m$ in $\mc M$. 

Note that, given a deterministic fair signal policy $S$ with associated signaling rule $\pi$, and given $y$, the total mass of users that travel on path $\gamma$ in $\Gamma_{od}$ is $\sum_{m \in \mathcal M} y_{\gamma m}^{od} \pi_m^{od}(\theta) \ups_{od}$, which depends on $S$ only via the associated rule $\pi$. Theferore, the corresponding network flow is
\begin{equation}\label{eq:flow_y}
f_e^{\pi,y}(\theta) = \sum_{o,d \in \mc V} \sum_{\gamma \in \Gamma_{od}} A_{e\gamma} \sum_{m \in \mathcal M} y_{\gamma m}^{od} \pi_m^{od}(\theta) \ups_{od}\,,
\end{equation}
for every $e$ in $\mc E$ and $\theta$ in $\R_+^{\mc E}$.
%More compactly,
%\begin{equation*}
%f^{\pi,y}(\theta)=Ay\pi(\theta)\,.
%\end{equation*}
%\tcr{questa non vale più, la posso riutilizzare solo nel caso singola $(o,d)$ con throughput unitario}
\begin{remark}\label{rem:flow}
Given $y$ and a deterministic fair signal policy $S$ with corresponding signaling rule $\pi$, the flow depends on $S$ only by $\pi$. Due to this observation and to Remark \ref{rem:posterior}, one can identify deterministic fair signal policies with the corresponding signaling rule. It is important to remark that for a signaling rule $\pi$ in $\Pi_\mc M$, a compatible deterministic fair signal policy $S$ may always be constructed following the procedure detailed in Example \ref{ex:signal}.
\end{remark}

We are now ready to define the notion of equilibrium flow.

%The condition of equilibrium is a situation in which users that have received message $m$ decide to go on path $i$ if and only if the posterior expected value of the cost on path $i$ is the lowest of the network with respect to the cost of all the other paths, i.e.
%\begin{equation*}
%\mathbb E_m[c_i(\Theta, f^{\pi,y}(\Theta))] \leq \mathbb E_m[c_j(\Theta, f^{\pi,y}(\Theta))] \quad \forall j \in \mathcal P
%\end{equation*}
%where, using Lemma \ref{bayes},
%\begin{equation*}
%\mathbb E_m[c_i(\Theta, f^{\pi,y}(\Theta))] = \frac{\int_{\mathbf{\Theta}} c_i\left(\theta, f^{\pi,y}\left(\theta\right)\right) \pi_m\left(\theta\right) d\mathbb{P}(\theta)}{\int_{\mathbf{\Theta}}\pi_m\left(\omega\right) d\P(\omega)}.
%\end{equation*}

\begin{definition}[Bayesian Wardrop equilibrium]
\label{def:bue}
Given a set of messages $\mc M$ and a signaling rule $\pi$ in $\Pi_{\mc M}$, a network flow $f^{\pi,y}$ \eqref{eq:flow_y} is a Bayesian Wardrop equilibrium (BWE) if, for every two nodes $o,d$ in $\mc V$, $i$ in $\Gamma_{od}$ and $m$ in $\mc M$ such that $y_{im}^{od}>0$, the inequality
\begin{equation}\label{eq:flow_equilibrium}
\int_{\R_+^{\mc E}} (c_i(\theta, f^{\pi,y}(\theta))-c_j(\theta,f^{\pi,y}(\theta)) \pi_m^{od} (\theta) d\P(\theta) \leq 0
\end{equation}
holds true for every $j$ in $\Gamma_{od}$.
\end{definition}
%\begin{remark}
%Note that the BWE equilibrium associated to a deterministic private signal policy $S$ depends on $S$ only through the associated signaling rule $\pi$.
%\end{remark}

The definition of BWE has the following interpretation. If $\int_{\R_+^{\mc E}} \pi_m^{od}(\theta) d\P(\theta) = 0$, namely, if almost no user $(o,d,u)$ receives message $m$, then \eqref{eq:flow_equilibrium} is trivially satisfied. If instead $\int_{\R_+^{\mc E}} \pi_m^{od}(\theta) d\P(\theta) > 0$, then \eqref{eq:flow_equilibrium} is equivalent by Proposition \ref{prp:bayes} to
%$$
%\int_{\R_+^{\mc E}} (c_i(\theta, f^{\pi,y}(\theta))-c_j(\theta,f^{\pi,y}(\theta)) d\P_m^{\gamma}(\theta) \leq 0\
%$$
requiring that, if some users $(o,d,u)$ receive message $m$ and select path $i$ in $\Gamma_{od}$, then path $i$ has to be no worse than any alternative path $j$ in $\Gamma_{od}$ according to the posterior belief of users $(o,d,u)$ that received message $m$.

The next result characterizes BWEs, providing a sufficient and necessary condition on $y$ under which a network flow $f^{\pi,y}$ is a BWE for a rule~$\pi$.
\begin{proposition}\label{prp:pot}
	Given a rule $\pi$, a network flow $f^{\pi,y}$
	is a BWE if and only if
	\begin{equation}\label{eq:y} 
		y \in \argmin_{x\in \R_+^{\mc V \times \mc V \times \Gamma \times \mathcal M}: \eqref{eq:con:y}} \Phi_\pi \left(x\right),
	\end{equation}
	where
$$
		\Phi_\pi(y) = \int_{\R_+^\mc E}\sum_{e \in \mathcal E} \int_0^{f_e^{\pi,y}(\theta)} \tau_e (\theta_e,s)ds d\P(\theta)\,.
$$
Moreover, the BWE is unique for every rule $\pi$.
\end{proposition}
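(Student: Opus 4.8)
The plan is to recognize \eqref{eq:y} as a convex optimization problem whose first-order optimality (KKT) conditions are exactly the variational inequality \eqref{eq:flow_equilibrium} defining a BWE, and then to invoke strict convexity of the objective in the link-flow variable to get uniqueness. First I would verify that $\Phi_\pi$ is well defined and convex as a function of $y$: for each fixed $\theta$, the map $s \mapsto \int_0^s \tau_e(\theta_e,r)\,dr$ is convex since $\tau_e(\theta_e,\cdot)$ is increasing, and $f_e^{\pi,y}(\theta)$ is linear in $y$ by \eqref{eq:flow_y}, so the integrand is convex in $y$; integrating over $\theta$ (legitimate thanks to the moment bounds on $\Theta$ and continuity of $\tau_e$) preserves convexity. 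The feasible set defined by \eqref{eq:con:y} is a (nonempty, compact) polytope, so a minimizer exists.

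Next I would compute the directional derivative of $\Phi_\pi$ at a feasible $y$ in a feasible direction. Differentiating under the integral sign (justified by dominated convergence using the moment assumptions, since $\tau_e(\theta_e,\cdot)$ applied at bounded flows is integrable) gives
$$
\frac{\partial \Phi_\pi}{\partial y_{im}^{od}}(y) = \ups_{od}\int_{\R_+^{\mc E}} \sum_{e\in\mc E} A_{ei}\,\tau_e(\theta_e, f_e^{\pi,y}(\theta))\,\pi_m^{od}(\theta)\,d\P(\theta) = \ups_{od}\int_{\R_+^{\mc E}} c_i(\theta, f^{\pi,y}(\theta))\,\pi_m^{od}(\theta)\,d\P(\theta),
$$
using \eqref{eq:cost_path}. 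Because the constraint \eqref{eq:con:y} is separable across $(o,d,m)$ and forces $\sum_\gamma y^{od}_{\gamma m}=1$ with $y^{od}_{\gamma m}=0$ off $\Gamma_{od}$, the feasible directions at $y$ are spanned (within each block) by $\delta^{(j)}-\delta^{(i)}$ for $i,j\in\Gamma_{od}$. Hence $y$ is a minimizer of the convex program iff for every block $(o,d,m)$ and every $i\in\Gamma_{od}$ with $y^{od}_{im}>0$ one has $\partial\Phi_\pi/\partial y^{od}_{im} \le \partial\Phi_\pi/\partial y^{od}_{jm}$ for all $j\in\Gamma_{od}$; dividing by $\ups_{od}$ (the case $\ups_{od}=0$ being vacuous) this is precisely \eqref{eq:flow_equilibrium}. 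This establishes the equivalence claimed in \eqref{eq:y}.

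For uniqueness, the subtle point is that $\Phi_\pi$ is \emph{not} strictly convex in $y$ — different $y$'s can produce the same flow — so I would instead argue uniqueness of the \emph{equilibrium flow} $f^{\pi,y}$. Since $\tau_e(\theta_e,\cdot)$ is strictly increasing, $s\mapsto\int_0^s\tau_e(\theta_e,r)\,dr$ is strictly convex, so $\Phi_\pi$ is a strictly convex function of the flow profile $(\theta\mapsto f(\theta))_{e}$ pointwise, and strictly convex on the (convex) set of attainable flows $\{f^{\pi,y}: y \text{ satisfies }\eqref{eq:con:y}\}$; thus the minimizing flow is unique. Concretely, if $f^{\pi,y^{(1)}}$ and $f^{\pi,y^{(2)}}$ were two distinct equilibrium flows, both minimize $\Phi_\pi$, but then the midpoint flow (attainable, as the midpoint of $y^{(1)},y^{(2)}$ is feasible) would have strictly smaller value, a contradiction. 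The main obstacle I anticipate is the measure-theoretic bookkeeping: justifying differentiation under the integral sign and integrability of $c_i(\theta,f^{\pi,y}(\theta))\pi_m^{od}(\theta)$ uniformly over the compact feasible set, which uses the affine/convex growth of $\tau_e$ together with $\E[\Theta_e],\E[\Theta_e^2]<\infty$ and the boundedness of attainable link flows by $\sum_{o,d}\ups_{od}$. Everything else is the standard Beckmann-potential argument adapted to the Bayesian setting, where the key structural fact is that $f^{\pi,y}$ depends on $y$ linearly with the weights $\pi_m^{od}(\theta)\ups_{od}$.
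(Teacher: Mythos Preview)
Your proposal is correct and follows essentially the same route as the paper: establish convexity of $\Phi_\pi$ via monotonicity of $\tau_e$ and linearity of $f^{\pi,y}$ in $y$, compute the partial derivatives (which the paper records as \eqref{eq:pop_game}), and identify the KKT conditions over the simplex constraints \eqref{eq:con:y} with the BWE inequality \eqref{eq:flow_equilibrium}. Your treatment is in fact more careful than the paper's terse appendix argument, particularly in noting that uniqueness pertains to the flow $f^{\pi,y}$ rather than to $y$ itself and in flagging the dominated-convergence justification for differentiating under the integral.
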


\begin{proof}
The proof is deferred to Appendix \ref{app:A}.
\end{proof}\medskip

\begin{remark}
Notice that $y$ can be interpreted as the action distribution of a weighted potential population game with multiple populations that differ in the received message $m$ and in the action sets $\Gamma_{od}$, since
\be\label{eq:pop_game}
\begin{aligned}
\!\!\! \frac{\partial {\Phi_\pi(y)}}{\partial y_{\gamma m}^{od}} & = \int_{\R_+^{\mc E}} \sum_{e \in \mc E} \tau_e (\theta_e, f^{\pi,y}_e(\theta))A_{e\gamma} \pi_m^{od}(\theta) \ups_{od}d\P (\theta)\\[3pt]
& = \ups_{od} \int_{\R_+^{\mc E}} c_\gamma(\theta, f^{\pi,y}(\theta)) \pi_m^{od} (\theta) d\P(\theta) \,,
\end{aligned}
\ee
i.e., the partial derivative of the potential function with respect to $y_{\gamma m}^{od}$ is equal to the expected cost function of path $\gamma$ for users that travel from $o$ to $d$ and receive message $m$ times a weight $\ups_{od}$.
%, to where the equivalence follows from \eqref{eq:flow_y}
%The weights for the population of users that travel from $o$ to $d$ and receive message $m$ is given by $\ups_{od}$.
For more details on this class of population games, see \cite{sandholm2010population}.
%%Proposition \ref{prp:pot} in particular implies that $y$ is the Nash equilibrium of a population game that admits a potential function
\end{remark}

Observe that, although multiple $y$ may satisfy \eqref{eq:y} for a rule $\pi$, the BWE $f^{\pi,y}$ is always unique by Proposition~\ref{prp:pot}. For this reason, for convenience of notation, we shall sometimes denote by $f^\pi$ the BWE corresponding to rule $\pi$ and omit $y$ when unnecessary.
%\tcr{potrei definire il costo di una policy come il costo della peggior policy}

Since the equilibrium flow $f^{\pi}$ is unique for every rule $\pi$, it is natural to define the \emph{cost of a rule} $\pi$ as the cost of the corresponding BWE $C(f^{\pi})$.

%\begin{remark}
%In this paper we focus on signal policies such that the fraction of users that receive each message is a deterministic function of the realization of the network state, as the one defined in Example \ref{ex:signal}. As a consequence of this and of Proposition \ref{prp:bayes}, the outcome of a fair private signal policy $S$ depends solely on the associated signaling rule $\pi$, which determines the mass of users that receive each message and the posterior belief that the users form about the network state.
%\end{remark}
%\tcr{toglierei il price of anarchy oppure lo terrei solo a fini simulativi}
%Moreover, we define the price of anarchy ($PoA$) of a policy $\pi$ as the ratio between the cost of the equilibrium $f^\pi$ and the optimal cost $f^*$, i.e.,
%\begin{equation}\label{eq:PoA}
%PoA (\pi) = \frac{C(f^\pi)}{C(f^*)} \ge 1.
%\end{equation}
%In other words, in the context of information design the price of anarchy measures how suboptimal an information policy is with respect to the system optimum.
%

\subsection{Problem formulation}
\label{sec:id_problem}

%The planner aims to jointly design the optimal set of messages $\mc M$ and deterministic fair private signal policy $S$ that minimize the system cost. 
In this paper, we aim to find conditions on the network topology, on the delay functions, and on the distribution of the network state under which optimality may be achieved by a deterministic fair signal policy $S$. In other words, we ask when the system-optimum flow may be obtained as a Bayesian Wardrop equilibrium for a certain signaling rule $\pi$, i.e., whether there exists a rule $\pi$ such that $f^{\pi} = f^*$. Then, a signal policy $S$ that complies with the rule $\pi$ may be constructed using the procedure in Example \ref{ex:signal}. 

In general, imposing that the signal policy is deterministic cancels out a degree of freedom that may be useful for the planner to improve the equilibrium cost (cf. \cite[Example 6]{koessler2022information}\footnote{This working paper has been published \cite{koessler2024correlated}, but the final version does not contain this example},\cite{massicot2022competitive}). However, we shall see that this restriction is without loss of generality when the question is whether optimality may be achieved by information.
%Remark \ref{rem:flow} allows to identify $S$ with the corresponding rule $\pi$. The information design problem is
%\begin{equation}\label{probpre}
%	(\mathcal{M}^*,\pi^*) \in \argmin_{\mathcal{M},\pi \in \Pi_\mc M} C(f^\pi). \\[6pt]
%\end{equation}
%%\end{problem}
%%\begin{remark}\label{remark:convex1}

%This problem is in general hard to solve for two main reasons. First, it involves the design of the optimal set of messages, which in principle may be every finite set of messages. Second, t

%following subproblem. 
%\begin{problem}\label{problem:1}
%Find conditions on $(\mc G,\tau,\Omega,\mc A,\P)$ (\tcr{non mi piace com'è scritto ma voglio avere un ambiente formale col problema per metterlo in luce (anche un ambiente equazione può andar bene)}) under which the optimal rule $\pi^*$ achieves the optimal cost, or, equivalently, $f^{\pi^*} = f^*$. 
%\end{problem}

\section{Preliminary considerations}
\label{sec:pub_policy}
An important class of rules is the class of \emph{public rules}. This class of rules is characterized by the fact that all users receive the same message, i.e. for every $\theta$ in $\text{Supp}(\Theta)$ there exists a message $\tilde m(\theta)$ such that $$
\pi^{od}_m(\theta) =
\left\{ \ba{ll}
 1 \quad & \text{if} \ m = \tilde m(\theta) \\
 0 & \text{otherwise}\,,
\ea
\quad \forall o,d \in \mc V\,.
\right.
$$
Two examples of public rules are \emph{no information} and \emph{full information}. This section illustrates these rules and motivates our problem by showing that they are in general suboptimal.
%The first one is when the policy $\pi$ gives \textit{no information} about the network state, i.e.  $\pi(\theta)$ is independent of $\theta$.
\subsection{No information}\label{sec:no}
The no information scenario can be casted in our framework as follows. There exists a message $m^*$ in $\mc M$ such that $\tilde m(\theta) = m^*$ for every $\theta$ in $\R_{+}^{\mc E}$, so that the planner sends to all users message $m^*$ independent of the network state realization. Let the no information rule be denoted by $\pi^{NI}$. In this case, the network flow \eqref{eq:flow_y} has entries 
\be\label{eq:f_no_info}
f_e^{\pi^{NI},y}(\theta)=\sum_{o,d \in \mc V} \sum_{\gamma \in \Gamma_{od}} A_{e\gamma} y_{\gamma m^*}^{od} \ups_{od}\,,
\ee 
which does not depend on the network state realization, since $\tilde m(\theta) = m^*$ does not. As a consequence of Proposition \ref{prp:pot}(ii), the posterior belief of all users coincides with their prior belief, i.e., $d\P_{m^*}^{od}(\theta) = d\P(\theta)$ for every $\theta$ in $\R_+^\mc E$ and for every two nodes $o,d$ such that $\ups_{od}>0$. Hence, the equilibrium condition in Definition \ref{def:bue} is equivalent to say that $f^{\pi^{NI},y}$ is a BWE if, for every path $i$ in $\Gamma_{od}$ such that $y_{im^*}^{od}>0$, it holds true
\begin{equation}\label{eq:no_info_eq}
	\mathbb{E}[c_i (\Theta,f^{\pi^{NI},y})]
	\le \mathbb{E}[c_j (\Theta,f^{\pi^{NI},y})]\,, \quad \forall j \in \Gamma_{od}\,.
\end{equation}
In plain words, if some users travel along path $i$ in $\Gamma_{od}$, then path $i$ has to be in expectation no worse than any alternative path $j$ in $\Gamma_{od}$ according to the prior belief, which coincides with posterior belief of all users. 

\subsection{Full information}\label{sec:full}
The full information scenario assumes that all users are informed of the realization of the network state. This scenario is formally well defined in our framework under the assumption that the network state $\Theta$ may take a finite number of values. In this case, $\mathcal M = \text{supp}(\Theta)$ and $\tilde m(\theta) = \theta$ for every $\theta$ in $\R_+^{\mc E}$, 
%and the full information policy $\pi^{FI}$ is defined by
%$$
%\pi_{m}^{\gamma}(\theta) =
%\left\{
%\ba{ll}
%1 \quad & \text{if} \ m = \theta \\
%0 \quad & \text{otherwise}\,,
%\ea
%\quad \forall \gamma \in \Gamma\,,
%\right.
%$$
namely, all users receive the same message, which reveals the exact network state realization $\theta$. 
Let the full information rule be denoted by $\pi^{FI}$.
As a consequence of Proposition \ref{prp:bayes}(ii), the posterior beliefs are
$$\P_m^{od}(\theta) =
\begin{cases}
1 \quad & \text{if} \ m = \theta \\
0 & \text{otherwise}\,,
\end{cases}
\quad \forall o,d \in \mc V, \ \forall m \in \mc M\,.
$$
In this case, the network flow \eqref{eq:flow_y} has entries
\be\label{eq:full_info_flow}
f_e^{\pi^{FI},y}(\theta)=\sum_{o,d \in \mc V} \sum_{\gamma \in \Gamma_{od}} A_{e\gamma} y^{od}_{\gamma\theta}\ups_{od}\,,
\ee
since the total flow over path $\gamma$ in $\Gamma_{od}$ when $\Theta = \theta$ is $y_{\gamma\theta}^{od}\ups_{od}$.
The equilibrium condition in Definition~\ref{def:bue} in this case is that, if $\P(\theta)>0$ and $y_{i\theta}^{od}>0$ for a path $i$ in $\Gamma_{od}$ when $\Theta = \theta$, then
\begin{equation}\label{eq:full_info}
	c_i (\theta,f^{\pi^{FI},y}(\theta)) \le
	c_j (\theta,f^{\pi^{FI},y}(\theta)) \quad \forall j \in \Gamma_{od}\,,
\end{equation}
namely, if some users $(o,d,u)$ receive message $\theta$ and travel on path $i$ in $\Gamma_{od}$, then path $i$ has to be no worse than any alternative path $j$ in $\Gamma_{od}$ when $\Theta = \theta$.

%We then show by a numerical example that in general they are both sub-optimal compared to the system optimum.
%In the rest of the section we shall assume that the delay functions are linear in the flow, that is
%$$
%\tau_e(\theta_e,f_e) = \frac{f_e}{\alpha_e} + \theta_e\,, \quad \forall e \in \mc E\,,
%$$
%with $\alpha_e>0$ deterministic and positive for every link. The next result proves that in this case, under the additional assumption that the full-information equilibrium is full support, the no-information policy and the full information policy achieve the same cost.

\subsection{Full information vs no information}
The next preliminary result states that, if the delay functions are affine and the BWEs corresponding to the no information and full information rule are full support, providing full information is equivalent to providing no information. The following example then shows that these rules do not achieve the optimal cost.
%\tcr{QUEST ASSUMPTION LA FACCIAMO PER TUTTO, MESSA QUI SEMBRA UN PO' NASCOSTA (di fatto sto assumendo che l'equilibrio no-info è full support e che l'equilibrio full-info è full support for every realization of $\theta$, mentre in seguito assumerò che il system-optimum è full support for every realization $\theta$ nel supporto, non sono esattamente la stessa cosa)}

\begin{assumption}\label{ass:linear}
	The delay functions are
	$$
	\tau_e(\theta_e,f_e) = \frac{f_e}{\alpha_e} + \theta_e\,, \quad \forall e \in \mc E\,,
	$$
	with $\alpha_e>0$ for every link $e$ in $\mc E$.
\end{assumption} 
\begin{proposition}\label{prp:no_full}
	Consider the no-information rule $\pi^{NI}$ defined in Section \ref{sec:no} and the full-information rule $\pi^{FI}$ defined in Section \ref{sec:full} and let Assumption \ref{ass:linear} hold true. Assume that $\pi^{NI}$ and $\pi^{FI}$ admit respectively a solution $y^{NI}$ and $y^{FI}$ of \eqref{eq:y} with full support, namely $y^{NI}$ such that $(y^{NI})^{od}_{\gamma m^*}>0$ for every two nodes $o,d$ in $\mc V$, $\gamma$ in $\Gamma_{od}$ and $y^{FI}$ such that $(y^{FI})^{od}_{\gamma\theta}>0$ for every two nodes $(o,d)$ in $\mc V$, $\gamma$ in $\Gamma_{od}$ and $\theta$ in $\text{supp}(\Theta)$. Then, $C(f^{\pi^{NI}}) = C(f^{\pi^{FI}})$, i.e., the two rules achieve the same cost.
\end{proposition}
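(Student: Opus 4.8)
The plan is to exploit the potential-function characterization of Proposition~\ref{prp:pot} together with the explicit linear-quadratic structure induced by Assumption~\ref{ass:linear}. Under the affine delay $\tau_e(\theta_e,f_e)=f_e/\alpha_e+\theta_e$, the integrand $\int_0^{f_e}\tau_e(\theta_e,s)\,ds = f_e^2/(2\alpha_e)+\theta_e f_e$, so for any rule $\pi$ the potential becomes
\be
\Phi_\pi(y)=\E\Big[\sum_{e\in\mc E}\tfrac{1}{2\alpha_e}\big(f_e^{\pi,y}(\Theta)\big)^2+\Theta_e\,f_e^{\pi,y}(\Theta)\Big]\,.
\ee
First I would write this out for each of the two rules. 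For the no-information rule, $f^{\pi^{NI},y}$ is the deterministic vector in \eqref{eq:f_no_info}, so $\Phi_{\pi^{NI}}(y)=\sum_e\big(f_e^2/(2\alpha_e)+\E[\Theta_e]f_e\big)$ with $f=f(y)$ depending only on the aggregate path choices $w_\gamma^{od}:=y^{od}_{\gamma m^*}\ups_{od}$. For the full-information rule, $f^{\pi^{FI},y}(\theta)$ from \eqref{eq:full_info_flow} depends on $\theta$ only through $y^{od}_{\gamma\theta}$; grouping terms, $\Phi_{\pi^{FI}}(y)=\sum_{\theta}\P(\theta)\big[\sum_e f_e(\theta)^2/(2\alpha_e)+\theta_e f_e(\theta)\big]$, which decouples across the values of $\theta$.

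The key idea is a change of variables at the equilibrium. Let $y^{FI}$ be the full-support minimizer for $\pi^{FI}$ and set $f^{FI}(\theta):=f^{\pi^{FI},y^{FI}}(\theta)$; define the \emph{averaged flow} $\bar f:=\E[f^{FI}(\Theta)]=\sum_\theta\P(\theta)f^{FI}(\theta)$. I claim $\bar f$ is exactly the no-information BWE link flow $f^{\pi^{NI}}$, and that the two potentials, evaluated at the respective equilibria, are related by a variance term that vanishes on the full-support overlap. Concretely, because $y^{FI}$ has full support, the equilibrium condition \eqref{eq:full_info} holds with equality for \emph{every} pair $i,j\in\Gamma_{od}$ and every $\theta$ in the support: all used paths have equal cost. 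Writing $c_\gamma(\theta,f^{FI}(\theta))$ as an affine function of $f^{FI}(\theta)$ and averaging over $\theta$, the equalities are preserved in expectation, i.e.\ $\E[c_i(\Theta,f^{FI}(\Theta))]=\E[c_j(\Theta,f^{FI}(\Theta))]$ for all $i,j\in\Gamma_{od}$; this is precisely the no-information equilibrium condition \eqref{eq:no_info_eq} satisfied by the aggregate path flow $\bar w^{od}_\gamma:=\sum_\theta\P(\theta)y^{FI,od}_{\gamma\theta}\ups_{od}$, which produces link flow $\bar f$. By uniqueness of the BWE (Proposition~\ref{prp:pot}), $f^{\pi^{NI}}=\bar f$. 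Symmetrically — and this is the reverse inclusion needed because a priori the no-information equilibrium might not be ``spreadable'' over $\theta$ — one uses that $y^{NI}$ has full support: the constant flow $f^{\pi^{NI}}(\theta)\equiv f^{\pi^{NI}}$ together with $y^{od}_{\gamma\theta}:=(y^{NI})^{od}_{\gamma m^*}$ satisfies \eqref{eq:full_info} with equality, hence is the full-information BWE, so $f^{\pi^{FI}}(\theta)\equiv f^{\pi^{NI}}$ for every $\theta$.

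Once both equilibrium flows are shown to equal the same constant vector $f^{\star}:=f^{\pi^{NI}}=f^{\pi^{FI}}(\cdot)$, the cost computation \eqref{system_cost} is immediate: $C(f^{\pi^{NI}})=\sum_e\big(f^\star_e/\alpha_e+\E[\Theta_e]\big)f^\star_e=C(f^{\pi^{FI}})$, since in the full-information cost $\E[\Theta_e f^\star_e]=\E[\Theta_e]f^\star_e$ by independence of the (constant) flow from $\Theta$. The main obstacle I anticipate is the reverse direction: arguing carefully that the \emph{full-support} no-information equilibrium forces the full-information equilibrium to be state-independent, rather than merely that one particular state-independent $y$ is \emph{a} solution of \eqref{eq:y} for $\pi^{FI}$. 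This is where I must lean on the uniqueness clause of Proposition~\ref{prp:pot} (the BWE link flow is unique even though $y$ need not be), plus the fact that, under Assumption~\ref{ass:linear}, $\Phi_{\pi^{FI}}$ is strictly convex in the link flows $f(\theta)$ for each $\theta$ separately, so the per-state minimizer $f^{FI}(\theta)$ is uniquely determined by the active-path structure — and the full-support hypotheses make that structure identical across $\theta$ and equal to the no-information one. A minor secondary point to handle is checking that $\bar w$ and the spread-out $y$ genuinely lie in the feasible set \eqref{eq:con:y} (nonnegativity and the normalization $\sum_\gamma y^{od}_{\gamma m}=1$), which is straightforward since convex combinations and copies of feasible points remain feasible.
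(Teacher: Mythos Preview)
Your ``reverse inclusion'' is where the argument breaks. You claim that the constant flow $f^{\pi^{NI}}$, replicated via $y^{od}_{\gamma\theta}:=(y^{NI})^{od}_{\gamma m^*}$, satisfies \eqref{eq:full_info} with equality for every $\theta$, and hence that $f^{\pi^{FI}}(\theta)\equiv f^{\pi^{NI}}$. But \eqref{eq:full_info} demands $c_i(\theta,f^{\pi^{NI}})=c_j(\theta,f^{\pi^{NI}})$ \emph{pointwise} in $\theta$, whereas the no-information condition \eqref{eq:no_info_eq} only gives this equality in expectation. Under Assumption~\ref{ass:linear}, $c_i(\theta,f^{\pi^{NI}})-c_j(\theta,f^{\pi^{NI}})=(A'\theta)_i-(A'\theta)_j+\text{const}$, which varies with $\theta$ unless the path free-flow delays are deterministic. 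Example~\ref{ex:1} exhibits this directly: for $x>0$ one has $f^{\pi^{FI}}((2,9/5+x))=(2/5+x/2,\,3/5-x/2)\neq(2/5,3/5)=f^{\pi^{NI}}$. The two BWE link flows are genuinely different; only their \emph{costs} coincide, so any attempt to prove the proposition by identifying the flows must fail.

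Your forward step, $\bar f:=\E[f^{\pi^{FI}}(\Theta)]=f^{\pi^{NI}}$, is correct, but you then put it to the wrong use. The paper's route---and the way to salvage yours---is to compare costs through the common path cost. Full support of $y^{FI}$ gives $c_\gamma(\theta,f^{\pi^{FI}}(\theta))=\lambda_{od}(\theta)$ for all $\gamma\in\Gamma_{od}$, so $C(f^{\pi^{FI}})=\sum_{od}\ups_{od}\,\E[\lambda_{od}(\Theta)]$. The decisive use of Assumption~\ref{ass:linear} is that $\lambda_{od}(\theta)$ is \emph{affine} in $\theta$ (the active-constraint equilibrium system is linear), hence $\E[\lambda_{od}(\Theta)]=\lambda_{od}(\E[\Theta])$. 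Full support of $y^{NI}$ together with $f^{\pi^{FI}}(\E[\Theta])=f^{\pi^{NI}}$ then identifies $\lambda_{od}(\E[\Theta])$ with the no-information common path cost, yielding $C(f^{\pi^{NI}})=\sum_{od}\ups_{od}\,\lambda_{od}(\E[\Theta])=C(f^{\pi^{FI}})$. The object whose expectation collapses is $\lambda_{od}$, not the link flow.
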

\begin{proof}
See Appendix \ref{app:linear}.
\end{proof}\medskip

\begin{ex}\label{ex:1}
Consider a network with two nodes $\mc V = \{o,d\}$ and $|\mc E| = 2$ parallel links from $o$ to $d$, and let $\ups_{od} = 1$. Consider a network state $\Theta$ with probability distribution
$$
\Theta = \begin{cases} (2,9/5 + x)\,, \ \text{with probability} \ $1/2$\,, \\
	(2,9/5 - x)\,, \ \text{with probability} \ $1/2\,,$ 
\end{cases}
$$
where $x$ is a real number in $[0,4/5]$. Let the delay functions be 
$$\tau_1(\theta_1, f_1)=f_1+\theta_1\,, \quad \tau_2(\theta_2,f_2) = f_2 + \theta_2.$$ 
Using \eqref{eq:system_opt_full}, the system-optimum gets
%$$
%f^*(\theta) = \left(\frac 12 + \frac{\theta_2-\theta_1}4, \frac 12 + \frac{\theta_1-\theta_2}4\right)
%$$
%so that
$$\begin{aligned}
&f^*((2,9/5+x)) = \left(\frac 9{20} + \frac{x}4, \frac {11}{20} -\frac{x}4 \right), \\[3pt]
&f^*((2,9/5-x)) = \left(\frac 9{20} - \frac{x}4, \frac {11}{20} +\frac{x}4\right)\,,
\end{aligned}$$
and the corresponding cost parametrized by $x$ is 
$$C_x(f^*) = \frac{479}{200}-\frac{1}{8}x^2.$$
Using \eqref{eq:no_info_eq} the no information BWE gets
$$
f^{\pi^{NI}}(\theta) = \left(\frac 25, \frac 35\right)\,, \quad \forall \theta \in \text{supp}(\Theta)\,,
$$
and using \eqref{eq:full_info} the full information BWE gets
$$\begin{aligned}
&f^{\pi^{FI}}((2,9/5+x)) = \left(\frac 25 + \frac x2, \frac 35 - \frac x2\right)\,, \\[3pt] &
f^{\pi^{FI}}((2,9/5+x)) = \left(\frac 25 - \frac x2, \frac 35 + \frac x2\right)\,.
\end{aligned}$$
Observe that the full information BWE depends on the realization of the network state, while the no information BWE does not. Nonetheless, the two rules achieve the same cost
$$
C(f^{\pi^{NI}}) = C(f^{\pi^{FI}}) = \frac{12}5 \,,
$$
as established by Proposition \ref{prp:no_full} (in fact, in this example the two equilibria are full support). The cost of the two signaling rules proves to be independent of $x$ due to the linearity of the delay functions and their symmetry with respect to $x$. Both the signaling rules are suboptimal, as illustrated in Figure~\ref{fig:ex1}.
\begin{figure}
	\centering
	\includegraphics[width=0.8\linewidth]{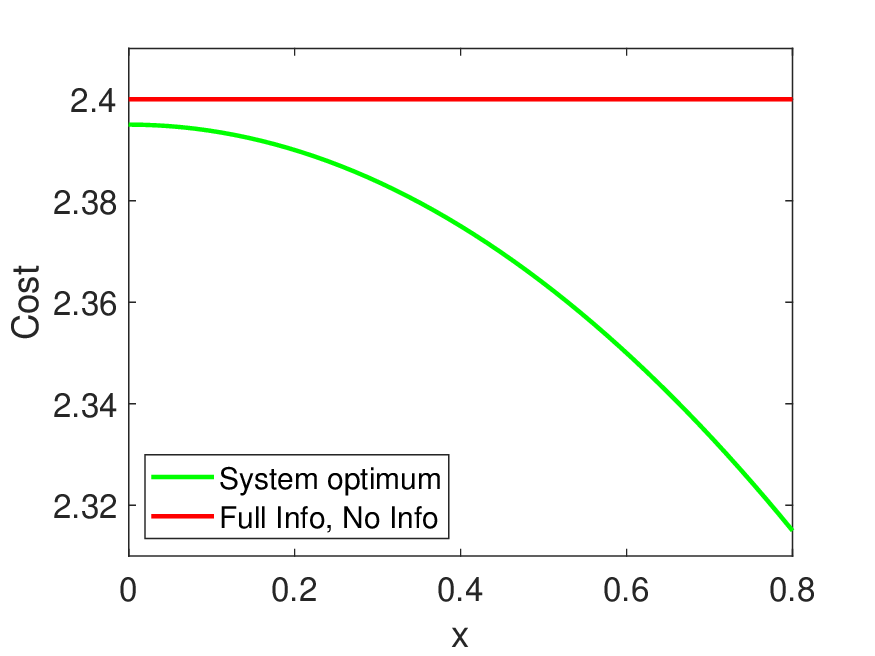}
	\caption{The cost of the system-optimum flow, the full information rule, and the no information rule for Example \ref{ex:1}.}
	\label{fig:ex1}
\end{figure}
\end{ex}
%$$
%\Theta_1 = 1,\quad \Theta_2 = \begin{cases}1/2 \ & \text{with probability} \ $1-p$ \\
%	3/2 & \text{with probability} \ $p\,,$ 
%\end{cases}
%$$
%with $p$ in $[0,1]$. Let the delay functions be 
%$$\tau_1(\theta_1, f_1)=2f_1+\theta_1\,, \quad \tau_2(\theta_2,f_2) = f_2 + \theta_2.$$
%Since $\Theta_1$ is deterministic, let $f^*(\theta_2)$ denote the system-optimum as a function of $\theta_2$. Direct computation yields
%$$
%f^*(\theta_2) = \left(\ba{c}(\theta_2 +1)/6 \\ (5-\theta_2)/6\ea\right)\,,
%$$
%and the corresponding cost as a function of $p$ is 
%$$C_p(f^*) = \frac{5}{3}+\frac{7}{12}p.$$
%
%Let $\pi^{FI}$ and $\pi^{NI}$ denote the full-information and the no information policy, where the index $\gamma$ is omitted the origin-destination pair is unique. By some computation, one can see that the full-information equilibrium and no-information equilibrium are
%$$
%f^{\pi^{FI}}(\theta_2) = \left(\ba{c}\theta_2 / 3 \\ 1 - \theta_2/3\ea\right), \quad f^{\pi^{NI}}(\theta_2) = \left(\ba{c}1/6 + p/3 \\ 5/6 - p/3\ea\right).
%$$
%The full-information equilibrium is computed by \eqref{eq:full_info} and depends on the realization of the network state, while the no-information equilibrium is computed by \eqref{eq:no_info_eq} and does not depend on $\theta$ by construction. Note also that $f^{\pi^{NI}}$ depends on $\E[\Theta]$, hence on $p$. The cost of the two policies result to be the same, i.e.,
%$$
%C(f^{\pi^{NI}}) = C(f^{\pi^{FI}}) = \frac 53 + \frac 23 p \,.
%$$

Example \ref{ex:1} shows that providing full information or no information is in general suboptimal. In fact, it is a well known fact that public information is inefficient in achieving optimality in routing games \cite{das2017reducing,tavafoghi2017informational,zhu2022information}. In the next section we shall investigate when optimality can be achieved by private signaling. Then, in Section \ref{sec:examples} we shall conclude Example \ref{ex:1} by showing that private signaling outperforms full information and no information provision.

\section{Main results}
\label{sec:results}
This section is divided in three parts. In the first subsection we establish the revelation principle, a general result that allows to restrict the attention to a special class of signaling rules. The second subsection answers our main question by providing sufficient and necessary conditions for optimality of the best signaling rule on certain network topologies. Finally, in the third subsection we provide more interpretable sufficient conditions for optimality.

\subsection{Revelation principle}
\label{sec:pb_ref}
Before establishing our results, we introduce the some important notions about signaling rules.
\begin{definition}[Direct and obedient rule]\label{def:ob}
A rule $\pi$ is called \emph{direct} if $\mathcal M = \Gamma$ and $$\pi_\gamma^{od}(\theta) = 0\,, \quad \forall o,d \in \mc V\,, \ \forall \gamma  \notin \Gamma_{od}\,, \ \forall \theta \in \mb \R_+^{\mc E}\,,
$$ 
A direct rule $\pi$ is called \emph{obedient} if every $y$ in $\R^{\mc V \times \mc V \times \Gamma \times \Gamma}_+$ such that \eqref{eq:con:y} and
\be\label{eq:obedient}
y^{od}_{ij} = \delta^{(i)}_j %\mathbf 1_{\mc P_{\gamma}}(i)
\,, \quad \forall o,d \in \mc V, \ \forall i, j \in \Gamma_{od}
\ee 
is a solution of \eqref{eq:y}. 
\end{definition}

In plain words, a rule is direct if messages correspond to path recommendations and if users $(o,d,u)$ can receive only path recommendations in $\Gamma_{od}$. A direct rule is obedient if no one has interest in deviating from the received recommendation.
\begin{proposition}\label{prp:obedience}
\emph{(i)} For an obedient rule $\pi$, the corresponding BWE is
\be\label{eq:flow_obedient}
\!\!\!f^\pi_e(\theta) = \sum_{o,d \in \mc V} \ups_{od}\sum_{\gamma \in \Gamma_{od}} A_{e\gamma} \pi_{\gamma}^{od}(\theta)\,,\ \forall e \in \mc E, \ \forall \theta \in \mathbf \R_+^{\mc E}\,.
\ee

\noindent \emph{(ii)} A direct rule $\pi$ is obedient if and only if
\be\label{eq:obedience}
\int_{\R_+^{\mc E}} (c_i(\theta, f^{\pi}(\theta))-c_j(\theta,f^{\pi}(\theta)) \pi_i^{od} (\theta) d\P(\theta) \leq 0
\ee
for every two nodes $o,d$ in $\mc V$ and two paths $i,j$ in $\Gamma_{od}$, with $f^{\pi}$ defined in \eqref{eq:flow_obedient}.
\end{proposition}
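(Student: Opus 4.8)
The plan is to read off both statements from the variational characterization of the BWE in Proposition~\ref{prp:pot}, together with the fact that the potential $\Phi_\pi$ is convex and continuously differentiable with gradient given by \eqref{eq:pop_game}. Throughout, let $\bar y$ denote the obedient action profile, i.e.\ $\bar y^{od}_{\gamma m}=\delta^{(\gamma)}_m$ for all $o,d\in\mc V$ and all $\gamma,m\in\Gamma_{od}$. The components of $\bar y$ attached to messages $m\notin\Gamma_{od}$ are left unconstrained by \eqref{eq:con:y}--\eqref{eq:obedient}, but they are immaterial, since a direct rule has $\pi_m^{od}\equiv 0$ for $m\notin\Gamma_{od}$, so neither $\Phi_\pi$ nor $f^{\pi,y}$ depends on them; hence we may unambiguously speak of \emph{the} obedient profile, and obedience of $\pi$ is equivalent to $\bar y\in\argmin_{x\in\mathcal Y}\Phi_\pi(x)$, where $\mathcal Y$ is the polytope of profiles satisfying \eqref{eq:con:y}.

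Part (i) is then immediate. By Definition~\ref{def:ob}, $\bar y$ solves \eqref{eq:y}, so $f^{\pi,\bar y}$ is a BWE, and by the uniqueness part of Proposition~\ref{prp:pot} it is \emph{the} BWE, i.e.\ $f^\pi=f^{\pi,\bar y}$. Substituting $\bar y$ into \eqref{eq:flow_y} and using $\pi_m^{od}(\theta)=0$ for $m\notin\Gamma_{od}$, the inner sum over messages collapses to its single term $m=\gamma$, which gives exactly \eqref{eq:flow_obedient}.

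For part (ii), observe that $\mathcal Y$ is a Cartesian product, over the triples $(o,d,m)$ with $m\in\Gamma_{od}$, of the probability simplices on $\Gamma_{od}$, and $\bar y$ is the vertex of $\mathcal Y$ whose $(o,d,m)$-block equals $\delta^{(m)}$. Since $\Phi_\pi$ is convex and $C^1$, $\bar y\in\argmin_{x\in\mathcal Y}\Phi_\pi(x)$ if and only if the directional derivative of $\Phi_\pi$ at $\bar y$ is nonnegative along every feasible direction. Because $\bar y$ is a vertex, the feasible cone is generated by the ``single-swap'' directions that, for a fixed population $(o,d)$ and a fixed message $i\in\Gamma_{od}$, move mass from path $i$ --- which carries all of $\bar y^{od}_{\cdot i}$ --- to another path $j\in\Gamma_{od}$; and as the directional derivative is linear in the direction, nonnegativity along all feasible directions is equivalent to nonnegativity along each such swap. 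A short computation with \eqref{eq:pop_game} shows that the directional derivative along the swap indexed by $(o,d,i,j)$ equals $\ups_{od}\int_{\R_+^{\mc E}}\big(c_j(\theta,f^{\pi}(\theta))-c_i(\theta,f^{\pi}(\theta))\big)\pi_i^{od}(\theta)\,d\P(\theta)$, with $f^\pi=f^{\pi,\bar y}$ as defined in \eqref{eq:flow_obedient}; requiring this to be $\ge 0$ for all $o,d\in\mc V$ and all $i,j\in\Gamma_{od}$ is precisely \eqref{eq:obedience}. Chaining these equivalences establishes (ii) in both directions.

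The argument is mostly bookkeeping. The steps that need a little care are: the reduction of the first-order optimality condition at the vertex $\bar y$ to the finitely many single-swap directions, which is where convexity of $\Phi_\pi$ upgrades the necessary condition into a sufficient one; and the sign/index matching between the generic obedience requirement of Definition~\ref{def:ob} and the weighted inequality \eqref{eq:obedience} --- in the latter, the path index $i$ carrying the weight $\pi_i^{od}$ is the recommendation one conditions on, while $j$ is the contemplated deviation. Convexity and $C^1$-smoothness of $\Phi_\pi$, the gradient formula \eqref{eq:pop_game}, and uniqueness of the BWE are all already available from Proposition~\ref{prp:pot} and its accompanying remark.
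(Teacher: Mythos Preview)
Your proof is correct and follows essentially the same route as the paper's. Both arguments hinge on the variational characterization of the BWE in Proposition~\ref{prp:pot}: obedience of $\pi$ means $\bar y$ minimizes $\Phi_\pi$, which for part~(i) immediately identifies the BWE as $f^{\pi,\bar y}$ and yields \eqref{eq:flow_obedient} by substitution, and for part~(ii) is equivalent to the first-order conditions at $\bar y$, which specialize to \eqref{eq:obedience}. The only cosmetic difference is that the paper shortcuts the first-order analysis by invoking Definition~\ref{def:bue} directly---since the BWE defining inequalities \eqref{eq:flow_equilibrium} \emph{are} the KKT conditions (as established in the proof of Proposition~\ref{prp:pot})---whereas you spell out the convex-analytic argument via feasible directions and single-swap generators of the tangent cone at the vertex $\bar y$. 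Your version is more self-contained and makes the biconditional in~(ii) more transparent; the paper's is terser but relies on the reader tracking that Definition~\ref{def:bue} already encodes exactly those first-order conditions.
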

\begin{proof}
\emph{(i)} By definition of obedient rule, every $y$ in $\R^{\mc V \times \mc V \times \Gamma \times \Gamma}_+$ that satisfies \eqref{eq:con:y} and \eqref{eq:obedient} is a solution of \eqref{eq:y}. Therefore, by Definition \ref{def:bue}, the BWE $f^\pi$ has entries
\begin{align*}
f_e^{\pi}(\theta) & = \sum_{o,d \in \mc V} \sum_{i \in \Gamma_{od}} A_{ei} \sum_{m \in \mathcal M} y_{im}^{od} \pi_m^{od}(\theta) \ups_{od} \\[5pt]
& = \sum_{o,d \in \mc V} \sum_{i \in \Gamma_{od}} A_{ei} \sum_{j \in \Gamma} \delta_i^{(j)} \pi_j^{od}(\theta) \ups_{od} \\[5pt]
& = \sum_{o,d \in \mc V} \ups_{od} \sum_{i \in \Gamma_{od}} A_{ei} \pi_i^{od}(\theta)\,,
\end{align*}
where the second equivalence follows from the fact that obedient rules are direct by definition and from \eqref{eq:obedient}.

\emph{(ii)} 
%This follows from \eqref{eq:obedient}.
It follows from Definition \ref{def:bue} that $\pi$ must satisfy \eqref{eq:flow_equilibrium}
for every two nodes $o,d$ in $\mc V$, two paths $i,j$ in $\Gamma_{od}$ and message $m$ in $\mc M$ such that $y_{im}^{od}>0$, where $y$ is a solution of \eqref{eq:y}. 
Since $\pi$ is an obedient rule, $\mc M = \Gamma$, and every $y$ that satisfies \eqref{eq:con:y} and \eqref{eq:obedient} (i.e., such that for every $i$ in $\Gamma_{od}$, it holds $y_{ii}^{od} = 1$ and $y_{ij}^{od} = 0$) is a solution of \eqref{eq:y}. The statement follows from these two facts. 
%A rule $\pi$ is obedient if and only if $\mc M = \Gamma$ and if the BWE of $\pi$ can be obtained by \eqref{eq:flow_y} with $y$ that satisfies \eqref{eq:obedient}. The statement follows from the fact that $\mc M = \Gamma$ by definition of obedient policy and from the fact that for every $i$ in $\Gamma_{od}$, it holds $y_{ii}^{od} = 1$ and $y_{ij}^{od} = 0$
%Therefore, a rule is obedient if and only if \eqref{eq:obedience} holds true for every two nodes $o,d$ in $\mc V$ and for every two paths $i,j$ in $\Gamma_{od}$.
\end{proof}\medskip

In plain words, constraints \eqref{eq:obedience}, referred to as \emph{obedience constraints}, state that a rule $\pi$ is obedient if every path $i$ in $\Gamma_{od}$ that is recommended to some users $(o,d,u)$ is no worse than any alternative path $j$ in $\Gamma_{od}$ according to the posterior belief of users $(o,d,u)$ that receive the recommendation to travel across path $i$. The set of obedient rules is
$$
\Pi = \{\pi \in \Pi_{\Gamma}: \eqref{eq:obedience} \ \text{holds true} \ \forall o,d \in \mc V, \ \forall i,j \in \Gamma_{od}\}\,.
$$

The next fundamental result states that, for every (possibly not direct) rule $\pi$, there always exists a direct and obedient rule $\tilde \pi$ such that $f^{\pi} = f^{\tilde \pi}$. This in turn implies that the two rules achieve the same cost. Therefore, it is without loss of generality to restrict the attention to obedient rules. Such a result is known in the literature of finite strategic games as \emph{revelation principle} \cite{bergemann2019information,bergemann2016bayes}.
\begin{theorem} \label{rv}
Consider a finite message set $\mc M$, a rule $\pi$ in $\Pi_{\mc M}$ and let $y$ be a solution of \eqref{eq:y} for rule $\pi$. Then, the rule $\tilde \pi$ in $\Pi_\Gamma$ defined by
\be\label{eq:tilde_pi}
\tilde\pi_\gamma^{od}(\theta) = \sum_{m \in \mc M}y_{\gamma m}^{od}\pi_{m}^{od}(\theta)\,, \quad \forall o,d \in \mc V, \ \forall \gamma \in \Gamma\,,
\ee
is direct and obedient. Moreover, $f^{\pi} = f^{\tilde \pi}$ and therefore $\pi$ and $\tilde \pi$ achieve the same cost.
\end{theorem}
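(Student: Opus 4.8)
The plan is to verify the three assertions in turn --- that $\tilde\pi$ is direct, that $\tilde\pi$ is obedient, and that $f^{\pi}=f^{\tilde\pi}$ --- after which equality of costs is automatic, since by \eqref{system_cost} the system cost is a functional of the network flow alone. Directness is a short computation: nonnegativity of $\tilde\pi^{od}_\gamma$ is inherited from that of $y$ and $\pi$; summing \eqref{eq:tilde_pi} over $\gamma\in\Gamma$ and using $\sum_{\gamma}y^{od}_{\gamma m}=1$ from \eqref{eq:con:y} gives $\sum_{\gamma\in\Gamma}\tilde\pi^{od}_\gamma(\theta)=\sum_{m\in\mc M}\pi^{od}_m(\theta)=1$, so $\tilde\pi\in\Pi_\Gamma$; and since $y^{od}_{\gamma m}=0$ whenever $\gamma\notin\Gamma_{od}$, again by \eqref{eq:con:y}, one gets $\tilde\pi^{od}_\gamma\equiv0$ for $\gamma\notin\Gamma_{od}$, which is directness.

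The core of the argument is to take the candidate obedient action distribution $\tilde y$ with $\tilde y^{od}_{ij}=\delta^{(i)}_j$ for all $i,j\in\Gamma_{od}$ (its values on messages $m\notin\Gamma_{od}$ are irrelevant, since $\tilde\pi^{od}_m\equiv0$ there) and to establish two facts: (a) $f^{\tilde\pi,\tilde y}=f^{\pi,y}$, and (b) $\tilde y$ satisfies the equilibrium inequalities of Definition \ref{def:bue} for the rule $\tilde\pi$. Fact (a) is a direct substitution: inserting $\tilde y^{od}_{\gamma m}=\delta^{(\gamma)}_m$ into \eqref{eq:flow_y} collapses the sum over messages, leaving $\sum_{o,d}\ups_{od}\sum_{\gamma\in\Gamma_{od}}A_{e\gamma}\tilde\pi^{od}_\gamma(\theta)$, and expanding $\tilde\pi^{od}_\gamma$ via \eqref{eq:tilde_pi} reproduces exactly \eqref{eq:flow_y} for $(\pi,y)$. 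For fact (b), fix $o,d$, a path $i\in\Gamma_{od}$ and a message $m$ with $\tilde y^{od}_{im}>0$; by construction of $\tilde y$ this forces $m=i$, so the inequality to check is $\int_{\R_+^{\mc E}}\big(c_i(\theta,f^{\tilde\pi,\tilde y}(\theta))-c_j(\theta,f^{\tilde\pi,\tilde y}(\theta))\big)\tilde\pi^{od}_i(\theta)\,d\P(\theta)\le0$ for every $j\in\Gamma_{od}$. Using (a) to replace $f^{\tilde\pi,\tilde y}$ by $f^{\pi,y}$ and expanding $\tilde\pi^{od}_i$ via \eqref{eq:tilde_pi}, the left-hand side becomes $\sum_{m\in\mc M}y^{od}_{im}\int_{\R_+^{\mc E}}\big(c_i(\theta,f^{\pi,y}(\theta))-c_j(\theta,f^{\pi,y}(\theta))\big)\pi^{od}_m(\theta)\,d\P(\theta)$, and since $y$ is a BWE for $\pi$, equation \eqref{eq:flow_equilibrium} forces the integral to be $\le0$ for every $m$ with $y^{od}_{im}>0$, while the summands with $y^{od}_{im}=0$ vanish; hence the whole sum is $\le0$. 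This splitting of a convex combination of posterior path-cost gaps into individually signed equilibrium inequalities is the one genuinely substantive step; the rest is bookkeeping.

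From (a) and (b), Proposition \ref{prp:pot} gives that $\tilde y$ solves \eqref{eq:y} for $\tilde\pi$; since $\tilde y$ is, up to irrelevant coordinates on unsent messages, the unique action distribution meeting \eqref{eq:con:y} and \eqref{eq:obedient}, Definition \ref{def:ob} is satisfied and $\tilde\pi$ is obedient (equivalently, the inequalities in (b) are precisely the obedience constraints \eqref{eq:obedience} for $\tilde\pi$ with the flow \eqref{eq:flow_obedient} equal to $f^{\pi,y}$, so Proposition \ref{prp:obedience}(ii) applies). Uniqueness of the BWE (Proposition \ref{prp:pot}) then yields $f^{\tilde\pi}=f^{\tilde\pi,\tilde y}=f^{\pi,y}=f^{\pi}$, and $C(f^{\tilde\pi})=C(f^{\pi})$ follows from \eqref{system_cost}. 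The only points requiring a word of care beyond (b) are the messages $m\notin\Gamma_{od}$ and the messages $m$ with $\int\pi^{od}_m\,d\P=0$, for which the relevant quantities are null and the constraints hold trivially.
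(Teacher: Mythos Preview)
Your proof is correct and follows essentially the same approach as the paper: verify directness from \eqref{eq:con:y}, compute that the induced flow under the candidate obedient response coincides with $f^{\pi,y}$, and then obtain the obedience inequalities by multiplying \eqref{eq:flow_equilibrium} by $y^{od}_{im}$ and summing over $m\in\mc M$. The only cosmetic difference is that you frame the obedience check via Definition~\ref{def:bue} for the explicit $\tilde y$ and then invoke Proposition~\ref{prp:pot}, whereas the paper goes directly through the characterization in Proposition~\ref{prp:obedience}(ii); the substantive step is identical.
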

\begin{proof}
%\tcr{(riguardo all'unicità di $y$ per una certa policy, dovrebbe aver a che fare con l'unicità dell'equilibrio sui path. Infatti, se $\tilde \pi$ è una policy obedient, il suo equilibrio sarà semplicemente la proezione sugli archi; ma dato che l'equilibrio è unico, potranno esserci altre $y$ buone solo c'è altro modo di proiettare il flusso sui path sugli archi)}
%Consider a rule $\pi$ and let $y$ be a solution of \eqref{eq:y} for that rule. 
Consider the rule $\tilde\pi$ defined by \eqref{eq:tilde_pi} and note that for every two nodes $o,d$ in $\mc V$
\be\label{eq:tilde_pi1}
\ba{rcl}
\ds \sum_{\gamma \in \Gamma} \tilde \pi_\gamma^{od}(\theta) & = & \ds \sum_{\gamma \in \Gamma} \sum_{m \in \mc M} y_{\gamma m}^{od} \pi_m^{od}(\theta) \\[10pt]
& = & \ds \sum_{m \in \mc M} \pi_m^{od}(\theta) \sum_{\gamma \in \Gamma} y_{\gamma m}^{od} \\[10pt]
& = & \ds \sum_{m \in \mc M} \pi_m^{od}(\theta) \\[10pt]
& = & 1
\ea
\ee 
for every $\theta$ in $\R_+^{\mc E}$,
where the third equivalence follows from \eqref{eq:con:y}.
Moreover, since $y_{\gamma m}^{od} = 0$ for every $\gamma$ not in $\Gamma_{od}$ by \eqref{eq:con:y}, it follows that
\be\label{eq:tilde_pi2}
\tilde\pi_\gamma^{od}(\theta) = 0\,, \quad \forall \gamma \notin \Gamma_{od}\,, \ \forall \theta \in \R_+^{\mc E}\,. 
\ee
Together, conditions \eqref{eq:tilde_pi1}-\eqref{eq:tilde_pi2} and the fact that $\tilde\pi$ has non-negative entries by construction prove that $\tilde \pi$ is a direct rule. We now assume that $\tilde\pi$ is obedient, so that the corresponding BWE $f^{\tilde\pi}$ is given by \eqref{eq:flow_obedient} and prove that \eqref{eq:obedience} holds true for every two nodes $o,d$ in $\mc V$ and two paths $i,j$ in $\Gamma_{od}$, so that Proposition \ref{prp:obedience}(ii) guarantees that $\tilde\pi$ is indeed an obedient rule.
%We now prove that $\tilde \pi$ satisfies \eqref{eq:obedience} with $f^{\tilde \pi}$ given by \eqref{eq:flow_obedient}, so that $\tilde\pi$ is an obedient rule by Proposition \ref{prp:obedience}, and that $f^{\tilde\pi} = f^{\pi}$. To this end,
%To show that it is also obedient, 
%consider an arbitrary $\tilde y$ in $\R^{\mc V \times \mc V \times \Gamma \times \Gamma}_+$ that satisfies \eqref{eq:con:y} and such that
%\be\label{eq:tilde_y}
%\tilde y^{od}_{ij} = \delta^{(i)}_j\,, \quad \forall o,d \in \mc V, \ \forall i, j \in \Gamma_{od}\,.
%\ee
%We now prove that $f^{\tilde \pi, \tilde y} = f^{\pi,y}$ and that $f^{\tilde \pi, \tilde y}$ is the BWE for rule $\tilde \pi$. To this end, note that
First, note that
\be\label{eq:flow}
\ba{rcl}
f^{\tilde\pi}_e(\theta) & = & \ds \sum_{o,d \in \mc V} \ups_{od} \sum_{\gamma \in \Gamma_{od}} A_{e\gamma} \tilde\pi_\gamma^{od}(\theta) \\[13pt]
& = & \ds \sum_{o,d \in \mc V} \ups_{od} \sum_{\gamma \in \Gamma_{od}} A_{e\gamma} \sum_{m \in \mathcal M} y_{\gamma m}^{od} \pi_m^{od}(\theta)\\[13pt]
& = & f^{\pi}_e(\theta)\,,
%& = & \ds \sum_{o,d \in \mc V} \ups_{od} \sum_{i \in \Gamma_{od}} A_{ei} \sum_{j \in \Gamma} \tilde\pi_j^{od} (\theta) \tilde y_{ij}^{od} \\[13pt]
%& = & \ds \sum_{\gamma \in \Gamma} v_{\gamma} \sum_{j \in \mc P_\gamma} \sum_{i \in \mathcal P_\gamma} A_{ei} \tilde\pi_j^\gamma (\theta) \tilde y_{ij}^\gamma \\[13pt]
%& = & f_e^{\tilde \pi, \tilde y}(\theta)\,,
\ea 
\ee
%\be\label{eq:flow}
%\ba{rcl}
%f^{\pi}_e(\theta) & = & \ds \sum_{o,d \in \mc V} \ups_{od} \sum_{\gamma \in \Gamma_{od}} A_{e\gamma} \sum_{m \in \mathcal M} y_{\gamma m}^{od} \pi_m^{od}(\theta)\\[13pt]
%& = & \ds \sum_{o,d \in \mc V} \ups_{od} \sum_{\gamma \in \Gamma_{od}} A_{e\gamma} \tilde\pi_\gamma^{od}(\theta) = f^{\tilde\pi}_e(\theta)\\[13pt]
%%& = & \ds \sum_{o,d \in \mc V} \ups_{od} \sum_{i \in \Gamma_{od}} A_{ei} \sum_{j \in \Gamma} \tilde\pi_j^{od} (\theta) \tilde y_{ij}^{od} \\[13pt]
%%& = & \ds \sum_{\gamma \in \Gamma} v_{\gamma} \sum_{j \in \mc P_\gamma} \sum_{i \in \mathcal P_\gamma} A_{ei} \tilde\pi_j^\gamma (\theta) \tilde y_{ij}^\gamma \\[13pt]
%%& = & f_e^{\tilde \pi, \tilde y}(\theta)\,,
%\ea 
%\ee
for every link $e$ in $\mc E$ and $\theta$ in $\R_+^{\mc E}$, hence $f^{\tilde\pi} = f^\pi$.
%where the third equivalence follows from \eqref{eq:tilde_pi2}-\eqref{eq:tilde_y} and the last one from \eqref{eq:flow_y} and from the fact that $\tilde \pi$ is direct, namely, $\mc M = \Gamma$. 
Since $f^{\pi}$ is the BWE for rule $\pi$, by Definition \ref{def:bue}, for every two nodes $o,d$ in $\mc V$, path $i$ in $\Gamma_{od}$ and message $m$ in $\mc M$ such that $y_{im}^{od}>0$, it holds true that
$$
	\int_{\R_+^{\mc E}} (c_i(\theta, f^{\pi}(\theta))-c_j(\theta,f^{\pi}(\theta)) \pi_m^{od} (\theta) d\P(\theta) \leq 0
$$
for every path $j$ in $\Gamma_{od}$.
We now multiply this equation by $y_{im}^{od}$ and sum over $m$ in $\mathcal M$. By \eqref{eq:tilde_pi}, we get that for every two nodes $o,d$ in $\mc V$ and for every two paths $i,j$ in $\Gamma_{od}$,
$$
%\ba{rcl}
\ds \int_{\R_+^{\mc E}} (c_i(\theta, f^{\pi}(\theta))-c_j(\theta,f^{\pi}(\theta)) \tilde\pi_i^{od}(\theta) d\P(\theta) \le0\,.
%& = & \ds \int_{\R_+^{\mc E}} (c_i(\theta, f^{\tilde \pi,\tilde y}(\theta))-c_j(\theta,f^{\tilde \pi,\tilde y}(\theta)) \tilde\pi_i^{\gamma}(\theta) d\P(\theta)\,,
%\ea
$$
This proves that $\tilde\pi$ is an obedient rule.
The fact that $\pi$ and $\tilde \pi$ achieve the same cost follows from \eqref{eq:flow}.
%Therefore, if for aell an $i$ in $\mathcal P$ such that $\int_{\Theta}\tilde\pi_i(\theta)>0$ and for every $j$ in $\mathcal{P}$
%\begin{equation}
%\label{eq:bue_direct}
%\int_{\mathbf{\Theta}} c_i(\theta, A\tilde\pi(\theta))\tilde\pi_i(\theta)d\P(\theta)
%\leq 
%\int_{\mathbf{\Theta}} c_j(\theta, A\tilde\pi(\theta))\tilde\pi_i(\theta)d\P(\theta).
%\end{equation}
%Notice that \eqref{eq:bue_direct} implies that $f^{\tilde\pi}$ is also a Bayesian user equilibrium with $\tilde y=\mathbf{I}$ according to \ref{def:bue}
%Moreover, $f^{\tilde\pi}(\Theta)=A\tilde\pi(\Theta)=Ay\pi(\Theta)=f^{\pi}(\Theta)$, hence the costs for the two policies are the same, i.e. $C(\pi)=C(\tilde\pi)$.
\end{proof}\medskip

\begin{remark}
A version of the revelation principle for non-deterministic signal policies has been recently established in \cite{massicot2025strategic}.
\end{remark}

Our problem can be simplified in light of Theorem \ref{rv}. Indeed, if there exists a rule $\pi$ such that $f^\pi = f^*$, then Theorem \ref{rv} implies that there must exist a direct obedient rule $\tilde \pi$ such that $f^{\tilde\pi} = f^\pi = f^*$. Hence, we can restrict the attention to the set of obedient policies. This is a huge simplification, since it allows to assume that $\mc M = \Gamma$, thus eliminating the problem of designing the set of messages, which in principle might be every finite set. Moreover, the BWE for obedient policies is simply given by \eqref{eq:flow_obedient} and does not involves the solution of an inner optimization problem as established by Proposition \ref{prp:pot}. In the next section we shall observe that also the garbling step (i.e., in our language, considering non-deterministic signal policies) is unnecessary for certain network topologies.

\subsection{Optimality conditions}
\label{sec:opt_cond}
In this section we provide conditions under which the optimal rule $\pi^*$ achieves the optimal cost $C(f^*)$. This is equivalent to requiring that $f^{\pi^*} = f^*$. To this end, we shall work under Assumption~\ref{ass:linear}, which guarantees that the delay functions are linear, i.e.,
$$
\tau_e(\theta_e,f_e) = \frac{f_e}{\alpha_e} + \theta_e\,, \quad \forall e \in \mc E\,
$$
and under the following additional assumption.
\begin{assumption}\label{ass:A}
	\emph{(i)} There is a single pair of nodes $o,d$ in $\mc V$ such that $\ups_{od}>0$, so that it is without loss of generality to identify $\Gamma = \Gamma_{od}$ and neglect all paths with zero flow. Moreover, the link-path incidence matrix $A$ of the network is injective. 
\end{assumption} 

Note that under Assumption~\ref{ass:A} the throughput is scalar and it is without loss of generality to assume $\ups=1$.

\begin{remark}
	For undirected networks, Assumption \ref{ass:A} covers the class of linearly independent networks, defined as the networks with a single origin-destination pair where every path possesses at least one exclusive link \cite{milchtaich2006network}.
\end{remark}

\begin{remark}
An important consequence of Assumption \ref{ass:A} is the existence of a unique $z^*: \R_+^{\mc E} \to \mc Z_\ups$ such that $f^* = Az^*$, referred to as \emph{system-optimum path flow}. This fact, together with the revelation principle, implies that a rule $\pi$ such that $f^\pi = f^*$ exists if and only if $\pi = z^*$ is an obedient rule, whose BWE (cf. Proposition \ref{prp:obedience}(i)) is
$$
f^{z^*}_e(\theta) = \sum_{\gamma \in \Gamma} A_{e\gamma} z^*_{\gamma}(\theta) = f_e^*(\theta)\,,\ \forall e \in \mc E, \ \forall \theta \in \mathbf \R_+^{\mc E}\,.
$$ 
Even more importantly, since $z^*$ is the only path flow such that $f^* = Az^*$ and since a generalization of the revelation principle holds true also for non-deterministic signal policies (cf. \cite{massicot2025strategic}), the garbling step (i.e., considering non-deterministic signal policies) is unnecessary. In fact, the system-optimum flow can be the BWE corresponding to an obedient rule only if a fraction of agents $z^*_\gamma(\theta)$ receives the recommendation to travel over path $\gamma$ when $\Theta = \theta$, which is true for deterministic fair signal policies.
\end{remark}

We now introduce some notation. We define the random variable
$$\mathbf b:= A'\Theta\,,$$
which represents the free-flow travel time of all paths in the network. Moreover, we define a symmetric matrix $M$ in $\R^{\Gamma \times \Gamma}$ by
$$M:=(2A'[\alpha]^{-1}A)^{-1}\,.$$
\begin{remark}
Note that $M$ is well defined as a consequence of $A$ being injective. Indeed, since $A$ is injective, for every $x \neq \mathbf 0$, it holds $ A x \neq \mathbf0$. Hence, since $[\alpha]$ is positive definite, for every $x \neq \mathbf 0$,
$$
x'M^{-1}x = x'(2A'[\alpha]^{-1}A)x = 2(Ax)'[\alpha]^{-1} A x > 0\,.
$$
This proves that $M^{-1}$ is positive definite and $M = (M^{-1})^{-1}$ is well defined and positive definite. Therefore, it also holds $\mathbf 1' M \mathbf 1 > 0$.
\end{remark}
%\begin{remark}\label{remark:delta}
%	Note that $\Delta$ belongs to $\R_+^{\mathcal P \times \mathcal P}$, but it has only $|\mc P|-1$ independent entries. Indeed, by construction it satisfies $\Delta_{kp} = \Delta_{ki} - \Delta_{pi}$ for every triple of paths $i,j,p$. Hence, the entire matrix can be deduced from a single row, and the diagonal is zero, as $\Delta$ is skew-symmetric matrix construction.
%\end{remark}

The next assumption states that the system optimum path flow $z^*$ is full-support for every realization of the network state. We can then establish our main result, which provides sufficient and necessary conditions under which optimality can be achieved by private signaling.
\begin{assumption}\label{ass:supp}
The system-optimum path flow $z^*(\theta)$ is full support for every $\theta$ in $\text{supp}(\Theta)$.
\end{assumption}

\begin{theorem}\label{thm:opt_gen}
	Let Assumptions \ref{ass:linear}-\ref{ass:supp} hold true. Then, $\pi^*=z^*$ and $C(f^{\pi^*}) = C(f^*)$ if and only if
	\begin{equation}\label{eq:ob_statement}
	\E\Big[(\mb b_i - \mb b_j)\big(W_i + \sum_{\gamma \in \Gamma}V_{i\gamma}(\mb b_\gamma - \mb b_i)\big)\Big] \le 0
	\end{equation}
	for every two paths $i,j$ in $\Gamma$,
	where
	\begin{equation}\label{eq:ob_gen}
	W_i = \frac{\sum_j M_{ij}}{\mathbf1'M\mathbf1}\,, \ \ V_{i\gamma} = \frac{\sum_{j,k}(M_{j\gamma} M_{ik} - M_{jk}M_{i\gamma})}{\mathbf1'M\mathbf1}\,,
	\end{equation}
	for every two paths $i,\gamma$ in $\Gamma$.
\end{theorem}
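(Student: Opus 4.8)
The plan is to invoke the revelation principle to reduce the claim to an obedience check for the single rule $z^*$, to compute $z^*$ and the equilibrium path costs in closed form using the affine delays and the interiority supplied by Assumption \ref{ass:supp}, and then to rewrite the resulting obedience inequalities as \eqref{eq:ob_statement}. By the remark following Assumption \ref{ass:A}, a rule $\pi$ with $f^{\pi}=f^*$ exists if and only if $z^*$ is an obedient rule (the forward direction uses Theorem \ref{rv} and the injectivity of $A$, the reverse uses \eqref{eq:flow_obedient}), and in that case $z^*$ is optimal, so $\pi^*=z^*$; conversely, if $z^*$ is not obedient no rule attains the optimum. Since the pointwise minimizer in \eqref{eq:system_opt_full} is unique, $C(f^{\pi^*})=C(f^*)$ is equivalent to $f^{\pi^*}=f^*$, hence to $z^*$ being obedient, which by Proposition \ref{prp:obedience}(ii) (using $f^{z^*}=Az^*=f^*$ from \eqref{eq:flow_obedient}) is in turn equivalent to $\E[(c_i(\Theta,f^*(\Theta))-c_j(\Theta,f^*(\Theta)))\,z^*_i(\Theta)]\le 0$ for every two paths $i,j$ in $\Gamma$; it remains to make this condition explicit.

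Next I would solve the quadratic program defining $f^*$. Under Assumption \ref{ass:linear}, writing $x=Az$ (a bijection onto $\mc F_\ups$ since $A$ is injective and $\ups=1$) turns the inner minimization of \eqref{eq:system_opt_full} into $z^*(\theta)=\argmin_{z\ge\0,\,\1'z=1}\big(\tfrac12 z'M^{-1}z+\mb b(\theta)'z\big)$, where I use $2A'[\alpha]^{-1}A=M^{-1}$ and $\mb b=A'\Theta$. By Assumption \ref{ass:supp} this minimizer is interior, so the sign constraints are inactive and the first-order condition for $\1'z=1$ gives $M^{-1}z^*(\theta)=\lambda(\theta)\1-\mb b(\theta)$, i.e.\ $z^*(\theta)=\lambda(\theta)M\1-M\mb b(\theta)$ with $\lambda(\theta)=(1+\1'M\mb b(\theta))/(\1'M\1)$. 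Substituting $f^*=Az^*$ into \eqref{eq:cost_path} and using $2A'[\alpha]^{-1}A=M^{-1}$ once more yields $c_\gamma(\theta,f^*(\theta))=\tfrac12\big(M^{-1}z^*(\theta)\big)_\gamma+\mb b_\gamma(\theta)=\tfrac12\big(\lambda(\theta)+\mb b_\gamma(\theta)\big)$, so the cost differences collapse to $c_i-c_j=\tfrac12(\mb b_i-\mb b_j)$ and the obedience condition reduces to $\E[(\mb b_i-\mb b_j)\,z^*_i]\le 0$ for all $i,j$.

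Finally I would match this inequality with \eqref{eq:ob_statement}. Expanding $\lambda(\theta)$ in $z^*_i(\theta)=\lambda(\theta)(M\1)_i-(M\mb b(\theta))_i$ and using the symmetry of $M$ (so that $(M\1)_i=\sum_j M_{ij}$ and $\1'M\mb b=\sum_k(M\1)_k\mb b_k$) gives $z^*_i=W_i+\sum_\gamma V_{i\gamma}\mb b_\gamma$ with precisely the $W_i,V_{i\gamma}$ of \eqref{eq:ob_gen} (after writing $(M\1)_i(M\1)_\gamma$ and $(\1'M\1)M_{i\gamma}$ as the double sums appearing there). Since $\sum_\gamma(M\1)_\gamma=\1'M\1$ and $\sum_\gamma M_{i\gamma}=(M\1)_i$, one checks $\sum_\gamma V_{i\gamma}=0$, hence $z^*_i=W_i+\sum_\gamma V_{i\gamma}(\mb b_\gamma-\mb b_i)$; substituting this into $\E[(\mb b_i-\mb b_j)z^*_i]\le 0$ reproduces \eqref{eq:ob_statement}, and reading the chain of equivalences in both directions completes the proof.

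I expect the main obstacle to be the quadratic-programming bookkeeping of the last two steps rather than any conceptual difficulty: one must (i) justify the interiority of the system-optimum path flow---the sole role of Assumption \ref{ass:supp}---which is what legitimizes the clean formula $z^*=\lambda M\1-M\mb b$ and the ensuing cancellation $c_i-c_j=\tfrac12(\mb b_i-\mb b_j)$, and (ii) recognize the identity $\sum_\gamma V_{i\gamma}=0$, without which $z^*_i$ would not line up with the centered weighting $\sum_\gamma V_{i\gamma}(\mb b_\gamma-\mb b_i)$ on the right-hand side of \eqref{eq:ob_statement}. All the closed-form identities hold only $\P$-almost surely, which is harmless since the obedience condition is stated as an expectation.
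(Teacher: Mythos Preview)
Your proposal is correct and follows essentially the same route as the paper: reduce to obedience of $z^*$ via the revelation principle and injectivity of $A$, solve the KKT conditions for $z^*$ using the interiority from Assumption \ref{ass:supp}, compute the cost difference $c_i-c_j$, and expand $z^*_i$ in terms of $W_i,V_{i\gamma}$.

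The one noteworthy difference is in how you obtain $c_i-c_j=\tfrac12(\mb b_i-\mb b_j)$. The paper introduces an auxiliary ``Wardrop'' vector $z^W(\theta)$ satisfying $\tfrac12 M^{-1}z^W+A'\theta=\mu\1$, writes $z^*$ as $z^W$ plus correction terms, and cancels them inside the cost difference; you instead substitute $M^{-1}z^*=\lambda\1-\mb b$ directly into $c=\tfrac12 M^{-1}z^*+\mb b$ to get $c_\gamma=\tfrac12(\lambda+\mb b_\gamma)$ in one line. Your shortcut is cleaner and avoids the auxiliary object entirely; the paper's detour through $z^W$ buys nothing extra here.
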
 

\begin{proof}
We aim to prove that $\pi = z^*$ is an obedient rule. If so, 
\be\label{eq:fz}
f^{z^*} = Az^* = f^*\,,
\ee
where the first equivalence follows from Proposition \ref{prp:obedience}(i). Hence, $\pi = z^*$ is an optimal rule and $C(f^{z^*}) = f^*$, thus achieving optimality. It follows from Proposition \ref{prp:obedience}(ii) and from \eqref{eq:fz} that $\pi = z^*$ is an obedient rule if and only if
\begin{equation}\label{eq:ob2}
\int_{\R_+^{\mc E}} (c_i(\theta, Az^*(\theta))-c_j(\theta,Az^*(\theta)) (z^*)_i^{od} (\theta) d\P(\theta) \leq 0
\end{equation}
for every $i,j$ in $\Gamma$,
namely, if and only if, for every path $i$, the users that receive message $i$ do not have incentive in deviating to any alternative path $j$. 
%Let $\tilde c(\theta,z) = c(\theta,Az)$ be the vector of path costs as a function of the path flow $z$ and of the network state realization. 
%%Let $b=A'\theta$ denote the realization of the random variable $\mb b$.
Assumption \ref{ass:supp} and the KKT optimality conditions imply that, given $\theta$ in $\text{supp}(\Theta)$, $z^*(\theta)$ is the unique vector in $\R_+^\Gamma$ such that $\1'z^*(\theta) = 1$ that admits a positive number $\lambda(\theta)$ such that 
$$
	\sum_{e \in \mc E} \frac{2 A_{e\gamma}}{\alpha_e} (Az^*(\theta))_e + (A'\theta)_\gamma = \lambda(\theta)\,, \quad \forall \gamma \in \Gamma\,.
$$
More compactly, the pair $(z^*(\theta),\lambda(\theta))$ satisfies
\be\label{eq:lambda}
M^{-1}z^*(\theta) + A'\theta = \lambda(\theta) \mathbf1\,.
\ee
Now, for every $\theta$ in $\text{supp}(\Theta)$, define the vector $z^W(\theta)$ (possibly with negative entries) that satisfies
$$
\sum_{e \in \mc E} \frac{A_{e\gamma}}{\alpha_e} (Az^W(\theta))_e + (A'\theta)_\gamma = \mu(\theta)\,, \quad \forall \gamma \in \Gamma\,,
$$
for a positive number $\mu(\theta)$ such that $\mathbf1' z^W(\theta) = 1$ holds true. In matrix form, this relation reads
\begin{equation}\label{eq:zw}
\frac 12 M^{-1} z^W(\theta) +  A'\theta = \mu(\theta) \mathbf1\,,
\end{equation}
%Notice that $z^W(\theta)$ corresponds to the user equilibrium when it is full-support, whereas $z^W(b)$ has non-positive entries when the equilibrium is not full-support.
%(actually, the dependence is on $\beta$). 
which guarantees uniqueness of $z^W$ since $M^{-1}$ is invertible.
Hence, for every $\theta$ in $\text{supp}(\Theta)$, there exist two positive numbers $\mu(\theta),\lambda(\theta)$ such that
\begin{equation*}\label{eq:opt}
z^*(\theta) = M(\lambda(\theta) \mathbf1 - A'\theta)\,, \quad
z^W(\theta) = 2M(\mu(\theta) \mathbf1 - A'\theta).
\end{equation*}
Using these two equations,
\begin{equation}\label{eq:opt_w}
z^*(\theta) = z^W(\theta) + (\lambda(\theta)-2\mu(\theta))M\mathbf1 + MA'\theta.
\end{equation}
From now on, we omit for convenience of notation the dependence on $\theta$ when unnecessary. We now compute explicitly the terms inside the integral in \eqref{eq:ob2}. It follows from \eqref{eq:cost_path} and from Assumption \ref{ass:linear} that
\begin{equation}\label{eq:cost}
c(\theta,Az) = \frac12 M^{-1}z + A'\theta\,.
\end{equation} Hence,
\begin{equation}\label{eq:cost_dif}
\!\!\!\begin{aligned}
	& \ c_i(\theta,Az^*)- c_j(\theta,Az^*) \\[6pt]
	= & \  (M^{-1}z^*)_i/2 + (A'\theta)_i - (M^{-1}z^*)_j/2 - (A'\theta)_j \\[6pt]
	= & \ (M^{-1}z^W)_i/2 + (A'\theta)_i - (M^{-1}z^W)_j/2 - (A'\theta)_j \ +\\[6pt]
	+ & \ \ds \left(\lambda-2\mu\right)(M^{-1}M\mathbf1)_i - \left(\lambda-2\mu\right)(M^{-1}M\mathbf1)_j \ +\\[6pt]
	+ & \ \ds \frac{1}{2}(M^{-1}MA'\theta)_i - \frac{1}{2}(M^{-1}MA'\theta)_j \\[6pt]
	= & \ \frac{(A'\theta)_i-(A'\theta)_j}2\,,
\end{aligned}
\end{equation}
where the first equality follows from \eqref{eq:cost}, the second one from \eqref{eq:opt_w} and the last one from \eqref{eq:zw}. It follows from \eqref{eq:lambda} and from $\mathbf1'z^* = 1$ that
\begin{equation*}\label{eq:lambda_z}
\lambda(\theta) = \frac{1+\mathbf1'MA'\theta}{\mathbf1'M\mathbf1}\,, \ z^*(\theta) = \frac{1+\mathbf1'MA'\theta}{\mathbf1'M\mathbf1}M \mathbf1 - MA'\theta.
\end{equation*}
Component-wise, by letting for simplicity $A'\theta = b$,
\be\label{eq:z}
\ba{rl}
\! & \!\!\! z^*_i\\[6pt]
\! = & \!\!\! \ds \frac{\ds \sum_j M_{ij}}{\ds \mathbf1'M\mathbf1} + \frac{\ds \sum_{j,k,\gamma} M_{jk} M_{i\gamma} b_k - \sum_{j,k,\gamma} M_{jk} M_{i\gamma}b_\gamma}{\ds \mathbf1'M\mathbf1} \\[10pt]
\! = & \!\!\! \ds \frac{\ds \sum_j M_{ij}}{\ds \mathbf1'M\mathbf1} + \frac{\ds \sum_{j,k,\gamma} \big(M_{jk}M_{i\gamma} (b_k-b_i) - M_{jk}M_{i\gamma} (b_\gamma-b_i)\big)}{\ds \mathbf1'M\mathbf1} \\[10pt]
\! = & \!\!\! \ds \frac{\ds \sum_j M_{ij}}{\ds \mathbf1'M\mathbf1} + \frac{\ds \sum_{j,k,\gamma} (M_{j\gamma}M_{ik}-M_{jk}M_{i\gamma}) (b_\gamma-b_i)}{\ds \mathbf1'M\mathbf1} \\[10pt]
\! = & \!\!\! \ds W_i + \sum_{\gamma \in \Gamma} V_{i\gamma} (b_\gamma-b_i).
\ea
\ee
Plugging this expression and \eqref{eq:cost_dif} inside \eqref{eq:ob2} we get \eqref{eq:ob_statement}, concluding the proof.
\end{proof}\medskip

%The proof of Theorem \ref{thm:opt_gen} relies on the revelation principle (cf. Theorem \ref{rv}), since $f^* = Az^*$ may be the equilibrium BWE of a rule $\pi$ if and only if the policy $\pi = z^*$ is obedient. Hence, the proof consists in verifying when $\pi = z^*$ is an obedient policy.

%\begin{remark}
%The conditions established in Theorem \ref{thm:opt_gen} are sufficient and necessary for optimality even in the more general setting whereby the planner is allowed to design a rule that randomizes among a set of obedient signaling rules for every network state realization, as considered in \cite{koessler2024correlated,massicot2022competitive}. This is due to the fact that, under our assumptions, the system optimum $z^*$ is unique.
%\end{remark}

Note that users that receive different messages may experience different travel times. This may be erroneously interpreted as a source of unfairness introduced by the planner. However, according to the definition of fair signal policy, all users with same origin and destination have the same probability of receiving each message, so that fairness is achieved in expectation. 
%This is a general fact that holds true in deterministic routing games at the system-optimum flow. \tcr{DETTO MALE} In contrast, the no information BWE is characterized by the fact that all routes have the same expected travel time.

Theorem \ref{thm:opt_gen} provides necessary and sufficient conditions for optimality. The drawback of the result is that these conditions are hard to interpret. In the next section we shall provide more intuitive sufficient conditions for optimality. In the rest of this section we shall prove that the optimality conditions provided in \cite{cianfanelli2023information,ambrogio2024information} for networks with parallel links are indeed implied by the general results of this paper. To this end, let
\begin{equation*}
	\begin{aligned}
		&\Delta_{ij} = \mathbb E[\mb b_i-\mb b_j]\,, \quad \forall i,j \in \Gamma\,, \\[5pt]
		&K_{el} = \mathbb E[\Theta_e\Theta_l]-\mathbb E[\Theta_e]\mathbb E[\Theta_l]\,, \quad \forall e,l \in \mathcal E\,,
	\end{aligned}
\end{equation*}
namely, $\Delta_{ij}$ is the expected difference between the free-flow travel time of paths $i$ and $j$ and $K$ is the covariance matrix of $\mb b$.

\begin{corollary}\label{cor:n}
	Consider a network with $\mc V = \{o,d\}$ nodes and $|\mc E| = N$ parallel links from $o$ to $d$, and let Assumptions \ref{ass:linear} and \ref{ass:supp} hold true.
	Then, $\pi^* = f^*$ if and only if
	\be\label{eq:optN_cor}
	\! \sum_{\gamma \neq i} \alpha_p (K_{ii}+K_{\gamma j}-K_{\gamma i}-K_{ij}) \ge	\Delta_{ij}(2+\sum_{\gamma \neq i} \alpha_\gamma \Delta_{\gamma i})
	\ee
	for every $i,j$ in $\mc E$.
\end{corollary}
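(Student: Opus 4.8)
The plan is to specialize Theorem~\ref{thm:opt_gen} to the parallel topology and then rewrite the abstract condition \eqref{eq:ob_statement} purely in terms of the first and second moments of $\mb b$. First I would observe that when $\mc V=\{o,d\}$ and $\mc E$ consists of $N$ parallel links, every link is by itself an $(o,d)$-path, so $\Gamma=\mc E$ and the link-path incidence matrix is the identity, $A=\mathbf I$. In particular Assumption~\ref{ass:A} holds automatically: the origin-destination pair is unique and $A=\mathbf I$ is trivially injective. Hence, under Assumptions~\ref{ass:linear} and \ref{ass:supp} (which are assumed in the corollary), Theorem~\ref{thm:opt_gen} applies and gives that $\pi^*=z^*=f^*$ achieves the optimal cost if and only if \eqref{eq:ob_statement} holds. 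Moreover, with $A=\mathbf I$ we have $\mb b=A'\Theta=\Theta$ and $M=(2A'[\alpha]^{-1}A)^{-1}=\tfrac12[\alpha]$, i.e.\ $M$ is the diagonal matrix with $M_{ii}=\alpha_i/2$.

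Next I would substitute this $M$ into the definitions \eqref{eq:ob_gen}. Since $\mathbf1'M\mathbf1=\tfrac12\sum_{\gamma}\alpha_\gamma$ and $\sum_j M_{ij}=\alpha_i/2$, one obtains $W_i=\alpha_i/\sum_{\gamma}\alpha_\gamma$. Evaluating the double sums in $V_{i\gamma}$ with $M$ diagonal gives $V_{i\gamma}=\alpha_i\alpha_\gamma/(2\sum_l\alpha_l)$ for $\gamma\neq i$ (the value of $V_{ii}$ is irrelevant, as explained below). In the inner sum of \eqref{eq:ob_statement} the $\gamma=i$ term is multiplied by $\mb b_i-\mb b_i=0$ and thus drops out, so the bracket in \eqref{eq:ob_statement} equals
$$
W_i+\sum_{\gamma\neq i}V_{i\gamma}(\mb b_\gamma-\mb b_i)
=\frac{\alpha_i}{2\sum_l\alpha_l}\Big(2+\sum_{\gamma\neq i}\alpha_\gamma(\mb b_\gamma-\mb b_i)\Big).
$$
Since the prefactor $\alpha_i/(2\sum_l\alpha_l)$ is strictly positive, condition \eqref{eq:ob_statement} is equivalent to
$$
\E\Big[(\mb b_i-\mb b_j)\Big(2+\sum_{\gamma\neq i}\alpha_\gamma(\mb b_\gamma-\mb b_i)\Big)\Big]\le 0
\qquad\text{for all }i,j\in\mc E.
$$

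Finally I would expand this expectation. Writing $\E[\mb b_e\mb b_l]=K_{el}+\E[\mb b_e]\E[\mb b_l]$ and recalling $\Delta_{ij}=\E[\mb b_i]-\E[\mb b_j]$, a direct expansion of $(\mb b_i-\mb b_j)(\mb b_\gamma-\mb b_i)$ together with the symmetry of $K$ yields
$\E[(\mb b_i-\mb b_j)(\mb b_\gamma-\mb b_i)]=\Delta_{ij}\Delta_{\gamma i}-(K_{ii}+K_{\gamma j}-K_{\gamma i}-K_{ij})$.
Substituting this and $\E[\mb b_i-\mb b_j]=\Delta_{ij}$ into the displayed inequality and collecting the $\Delta_{ij}$ terms on one side gives precisely \eqref{eq:optN_cor}. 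I do not expect a genuine obstacle in this argument: the only points requiring care are the bookkeeping of signs and indices when expanding the cross moments into covariances and mean differences, and the simplification of the diagonal double sums defining $V_{i\gamma}$; everything else is mechanical once Theorem~\ref{thm:opt_gen} is in hand.
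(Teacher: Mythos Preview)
Your proposal is correct and follows essentially the same route as the paper's own proof: specialize Theorem~\ref{thm:opt_gen} to the parallel case $A=\mathbf I$, compute $M=\tfrac12[\alpha]$, $W_i$, $V_{i\gamma}$, and then expand the expectation in terms of $\Delta$ and $K$. Your bookkeeping is in fact slightly cleaner than the paper's (you correctly get $V_{i\gamma}=\alpha_i\alpha_\gamma/(2\sum_l\alpha_l)$, whereas the paper's display \eqref{eq:WV} omits the factor $2$ but then silently uses the correct value in the next line).
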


\begin{proof}
	Note that networks with parallel links satisfies Assumption \ref{ass:A}, hence Theorem \ref{thm:opt_gen} applies. Moreover, $\Gamma = \mc E$ and $M=[\alpha]/2$. Hence, from \eqref{eq:ob_gen},
	\begin{equation}\label{eq:WV}
		W_i = \frac{\alpha_i}{\sum_k \alpha_k}, \
		V_{i\gamma} = \frac{\alpha_\gamma\alpha_i}{\sum_k \alpha_k}\,, \quad \forall i, \gamma \in \mc E, i \neq \gamma\,.
	\end{equation}
	Plugging this expression into \eqref{eq:ob_statement}, we get that the necessary and sufficient condition for optimality is
	\begin{equation*}
		\E\bigg[(\mb b_i - \mb b_j)\bigg(\frac{\alpha_i}{\sum_k \alpha_k} + \sum_{\gamma \in \mc E}\frac{\alpha_\gamma\alpha_i}{2\sum_k \alpha_k}(\mb b_\gamma - \mb b_i)\bigg)\bigg] \le 0\,,
	\end{equation*}
	for every two links $i,j$ in $\mc E$, which is equivalent to
	\begin{equation*}
		2\Delta_{ij}+ \sum_{\gamma \in \mc E}\alpha_\gamma \E[(\mb b_\gamma - \mb b_i)(\mb b_i - \mb b_j)] \le 0\,.
	\end{equation*}
	Note that for networks with parallel links $\Theta = \mb b$. Hence, this is equivalent to
	\begin{equation*}
		2\Delta_{ij}+ \sum_{\gamma \neq i}\alpha_\gamma \left[\Delta_{ij}\Delta_{\gamma i} +K_{i\gamma}-K_{ii}-K_{\gamma j}+K_{ij}\right] \le 0\,,
	\end{equation*}
	which is equivalent to \eqref{eq:optN_cor}. This proves the statement.
\end{proof}\medskip

\begin{corollary}\label{cor:2}
Consider a network with $\mc V = \{o,d\}$ and $|\mc E| = 2$ parallel links from $o$ to $d$. Let Assumptions \ref{ass:linear} and \ref{ass:supp} hold true.
Then, $\pi^* = f^*$ if and only if
\be\label{eq:opt_2}
\sigma^2 \ge \max\{\Delta_{12}(2/\alpha_2 - \Delta_{12}),\Delta_{21}(2/\alpha_1 - \Delta_{21})\}\,,
\ee
where $\sigma^2 = \text{Var}(\Theta_1 - \Theta_2)$.
\end{corollary}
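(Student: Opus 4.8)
The plan is to obtain Corollary \ref{cor:2} as a direct specialization of Corollary \ref{cor:n} to the case $N=2$, so that essentially all the analytic work has already been done and only routine bookkeeping remains. First I would observe that a network of two parallel links trivially satisfies Assumption \ref{ass:A} — there is a single origin--destination pair and the link--path incidence matrix is the identity on $\mc E=\{1,2\}$ — so that Corollary \ref{cor:n} applies and yields $\pi^*=f^*$ if and only if \eqref{eq:optN_cor} holds for every $i,j$ in $\mc E$.

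Next I would dispose of the diagonal cases $i=j$: there $\Delta_{ii}=0$, while the left-hand side of \eqref{eq:optN_cor} collapses to $\sum_{\gamma\neq i}\alpha_\gamma(K_{ii}+K_{\gamma i}-K_{\gamma i}-K_{ii})=0$, so the inequality reads $0\ge 0$ and carries no information. Hence only the two off-diagonal pairs $(i,j)=(1,2)$ and $(i,j)=(2,1)$ produce genuine constraints, and for each of them the sum over $\gamma\neq i$ reduces to a single term, with $\gamma$ the other link. Using symmetry of $K$ and bilinearity of the covariance, the common factor on the left-hand side simplifies to $K_{11}+K_{22}-K_{12}-K_{21}=\text{Var}(\Theta_1)+\text{Var}(\Theta_2)-2\,\text{Cov}(\Theta_1,\Theta_2)=\text{Var}(\Theta_1-\Theta_2)=\sigma^2$. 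For $(i,j)=(1,2)$ the single summand has $\gamma=2$, so \eqref{eq:optN_cor} becomes $\alpha_2\sigma^2\ge \Delta_{12}(2+\alpha_2\Delta_{21})$; substituting $\Delta_{21}=-\Delta_{12}$ and dividing by $\alpha_2>0$ gives $\sigma^2\ge\Delta_{12}(2/\alpha_2-\Delta_{12})$. Symmetrically, $(i,j)=(2,1)$ (single summand $\gamma=1$) gives $\sigma^2\ge\Delta_{21}(2/\alpha_1-\Delta_{21})$. Imposing both is precisely \eqref{eq:opt_2}, which finishes the proof.

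I do not anticipate any genuine obstacle, since the statement is a pure specialization; the only point deserving a little care is the index bookkeeping in the term $\Delta_{\gamma i}$ of \eqref{eq:optN_cor} — when $i=1$ the only admissible $\gamma$ is $2$, so $\Delta_{\gamma i}=\Delta_{21}=-\Delta_{12}$, and symmetrically with $1$ and $2$ interchanged — which is exactly what makes the two surviving constraints the two distinct inequalities inside the maximum in \eqref{eq:opt_2}. One could alternatively bypass Corollary \ref{cor:n} and argue directly from Theorem \ref{thm:opt_gen}, using that for parallel links $\Gamma=\mc E$ and $M=[\alpha]/2$, but this merely retraces the computation already carried out in the proof of Corollary \ref{cor:n}.
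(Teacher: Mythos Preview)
Your proposal is correct and follows essentially the same route as the paper: specialize Corollary~\ref{cor:n} to $N=2$, reduce to the two off-diagonal constraints, and simplify using $K_{11}+K_{22}-2K_{12}=\sigma^2$ and $\Delta_{21}=-\Delta_{12}$. You add a bit more detail (the vacuous diagonal case, the check of Assumption~\ref{ass:A}), but the argument is identical.
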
\smallskip
\begin{proof}
Corollary \ref{cor:n} with $N=2$ links states that a sufficient and necessary condition for optimality is that
$$
\ba{rcl}
\Delta_{12}(2+\alpha_2\Delta_{21}) & \le & \alpha_2 (K_{11}+K_{22}-K_{12}-K_{21})\\[4pt]
\Delta_{21}(2+\alpha_1\Delta_{12}) & \le & \alpha_1(K_{11}+K_{22}-K_{12}-K_{21})\,.
\ea
$$
The proof follows from $\sigma^2 = K_{11} + K_{22} - 2K_{12}$, and from $\Delta_{21} = - \Delta_{12}$.
\end{proof}\medskip

In the next section we shall establish more intuitive sufficient conditions for optimality that follows from the sufficient and necessary conditions established by these results.

\subsection{Sufficient conditions for optimality}
This section is devoted to provide intuitive conditions for optimality based on the results of the previous section.

\begin{corollary}\label{cor:2bis}
	Consider a network with node set $\mc V = \{o,d\}$ and $|\mc E| = 2$ parallel links from $o$ to $d$. Let Assumptions \ref{ass:linear} and \ref{ass:supp} hold true and let $\Delta = \0$. 
	Then, $\pi^*=f^*$ and $C(f^{\pi^*}) = C(f^{*})$. 
\end{corollary}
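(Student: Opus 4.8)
The plan is to deduce this corollary directly from the sharp characterization already obtained in Corollary \ref{cor:2} for two parallel links, rather than redoing any of the computation behind Theorem \ref{thm:opt_gen}. The hypotheses of Corollary \ref{cor:2} are precisely Assumptions \ref{ass:linear} and \ref{ass:supp} together with the two-parallel-link topology, all of which are assumed here, so that result applies verbatim: optimality $\pi^*=f^*$ holds if and only if
$$
\sigma^2 \ge \max\{\Delta_{12}(2/\alpha_2 - \Delta_{12}),\ \Delta_{21}(2/\alpha_1 - \Delta_{21})\}\,,
$$
with $\sigma^2 = \mathrm{Var}(\Theta_1-\Theta_2)$. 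So the whole task reduces to checking that the right-hand side of this inequality is nonpositive under the extra hypothesis $\Delta=\0$, while the left-hand side is automatically nonnegative.

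\textbf{Key steps.} First, I would unpack $\Delta=\0$: by definition $\Delta_{ij}=\E[\mb b_i-\mb b_j]$, so $\Delta=\0$ forces $\Delta_{12}=\Delta_{21}=0$. Substituting these into the characterization of Corollary \ref{cor:2}, both arguments of the maximum vanish, so the necessary and sufficient condition becomes simply $\sigma^2\ge 0$. Second, I would note that $\sigma^2=\mathrm{Var}(\Theta_1-\Theta_2)$ is the variance of a real random variable and hence is nonnegative (it is well-defined and finite because $\Theta_1,\Theta_2$ have finite second moments by assumption). Therefore the condition of Corollary \ref{cor:2} is satisfied, which yields $\pi^*=f^*$. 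Finally, since the optimal rule's BWE then coincides with the system-optimum flow, its cost equals $C(f^*)$, giving $C(f^{\pi^*})=C(f^*)$.

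\textbf{Main obstacle.} Honestly there is essentially no analytical obstacle once Corollary \ref{cor:2} is in hand; the only points requiring a moment of care are bookkeeping ones: confirming that $\Delta=\0$ is being read as $\Delta_{12}=\Delta_{21}=0$ (consistent with the notation fixed before Corollary \ref{cor:n}), and observing that the degenerate case $\sigma^2=0$ — i.e. $\Theta_1-\Theta_2$ almost surely constant — is still covered, since the inequality then holds with equality. One could alternatively obtain the same conclusion by plugging $W_i,V_{i\gamma}$ from \eqref{eq:WV} directly into \eqref{eq:ob_statement} and simplifying with $\Delta_{12}=\Delta_{21}=0$, but routing through Corollary \ref{cor:2} is the shortest path and I would present it that way.
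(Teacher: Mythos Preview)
Your proposal is correct and matches the paper's own proof essentially verbatim: the paper also invokes Corollary~\ref{cor:2}, observes that $\Delta=\0$ makes the right-hand side of \eqref{eq:opt_2} vanish, and concludes since $\sigma^2\ge 0$. Your additional remarks on the degenerate case $\sigma^2=0$ and the alternative route via \eqref{eq:WV} are fine but unnecessary for the argument.
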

\begin{proof}
	The proof follows immediately from Corollary \ref{cor:2} by noticing that \eqref{eq:opt_2} is satisfied when $\Delta = \0$, since $\sigma^2$ is non-negative by construction.
\end{proof}\medskip

\begin{corollary}\label{cor:nbis}
Consider a network with $\mc V = \{o,d\}$ and $|\mc E| = N$ parallel links from $o$ to $d$. Let Assumptions \ref{ass:linear} and \ref{ass:supp} hold true. Let:
\begin{enumerate}
	\item[(i)] $\Delta = \0$;
	\item[(ii)] $K$ be diagonal.
\end{enumerate}
Then, $\pi^* = f^*$ and $C(f^{\pi^*}) = C(f^*)$.
\end{corollary}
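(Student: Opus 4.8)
The plan is to obtain this corollary as an immediate consequence of the sufficient-and-necessary characterization in Corollary \ref{cor:n}. A network of $N$ parallel links satisfies Assumption \ref{ass:A}, so Corollary \ref{cor:n} applies and states that $\pi^* = f^*$ holds if and only if inequality \eqref{eq:optN_cor} is satisfied for every pair of links $i,j$ in $\mc E$. Thus the entire proof reduces to checking \eqref{eq:optN_cor} under hypotheses (i)--(ii), after which $C(f^{\pi^*}) = C(f^*)$ follows at once from $f^{\pi^*} = f^*$.

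First I would dispose of the right-hand side of \eqref{eq:optN_cor}: since $\Delta = \0$ by (i), we have $\Delta_{ij} = 0$ for all $i,j$, hence $\Delta_{ij}\big(2 + \sum_{\gamma \neq i}\alpha_\gamma \Delta_{\gamma i}\big) = 0$ identically. Then I would evaluate the left-hand side using that $K$ is diagonal by (ii). For $i = j$ each summand equals $K_{ii} + K_{ii} - K_{ii} - K_{ii} = 0$, so \eqref{eq:optN_cor} reads $0 \ge 0$. For $i \neq j$, in the sum $\sum_{\gamma \neq i}\alpha_\gamma(K_{ii} + K_{\gamma j} - K_{\gamma i} - K_{ij})$ the terms $K_{\gamma i}$ all vanish because the summation index satisfies $\gamma \neq i$, the term $K_{ij}$ vanishes because $i \neq j$, and $K_{\gamma j}$ is nonzero only at $\gamma = j$, an index that does occur in the sum since $j \neq i$; collecting what survives yields left-hand side $= K_{ii}\sum_{\gamma \neq i}\alpha_\gamma + \alpha_j K_{jj}$.

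To conclude I would invoke $\alpha_e > 0$ for every $e$ (Assumption \ref{ass:linear}) together with $K_{ii} = \E[\Theta_i^2] - \E[\Theta_i]^2 \ge 0$, so the displayed left-hand side is nonnegative. Hence for every $i,j \in \mc E$ the left-hand side of \eqref{eq:optN_cor} is $\ge 0 = $ right-hand side, so \eqref{eq:optN_cor} holds, and Corollary \ref{cor:n} gives $\pi^* = f^*$ and $C(f^{\pi^*}) = C(f^*)$.

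I do not expect a genuine obstacle here: the only delicate point is the bookkeeping of which entries of $K$ survive once it is diagonal, together with remembering that the sum runs over $\gamma \neq i$ --- precisely the restriction that annihilates the potentially troublesome $K_{\gamma i}$ terms. (A cosmetic remark: \eqref{eq:optN_cor} as printed appears to carry a typo, $\alpha_p$ in place of $\alpha_\gamma$ in the first sum; the argument is unaffected, since in every case that coefficient multiplies a nonnegative quantity.)
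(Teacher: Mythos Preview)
Your proposal is correct and follows exactly the paper's approach: invoke Corollary \ref{cor:n}, observe that $\Delta=\0$ kills the right-hand side of \eqref{eq:optN_cor}, and that diagonality of $K$ together with $K_{ee}\ge0$ makes the left-hand side nonnegative. You simply spell out the bookkeeping in more detail than the paper does (and correctly flag the $\alpha_p$ typo).
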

\begin{proof}
	The proof follows from Corollary \ref{cor:n} by noticing that the right-hand side of \eqref{eq:optN_cor} is zero because $\Delta = \0$, and the left-hand side is non-negative because $K$ is a diagonal matrix with non-negative by definition of covariance matrix.
\end{proof}\medskip

\begin{proposition}\label{prp:suff_general} 
Let Assumptions \ref{ass:linear}-\ref{ass:supp} hold true. Let:
\begin{enumerate}
\item[(i)] $\Delta = \0$;
\item[(ii)] $K$ be diagonal;
\item[(iii)] $V$ be a Metzler matrix, i.e., $V_{ij} \ge 0$ for every $i \neq j$.
\end{enumerate}
Then, $\pi^*=z^*$ and $C(f^{\pi^*}) = C(f^*)$.
\end{proposition}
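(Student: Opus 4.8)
The plan is to plug hypotheses (i)--(iii) into the necessary and sufficient optimality condition~\eqref{eq:ob_statement} of Theorem~\ref{thm:opt_gen}, which is available because Assumptions~\ref{ass:linear}--\ref{ass:supp} hold. Fix two paths $i,j$ in $\Gamma$; if $i=j$ the left-hand side of~\eqref{eq:ob_statement} is identically zero, so I may assume $i\neq j$. Since $W_i$ and the $V_{i\gamma}$ are deterministic, linearity of expectation turns the left-hand side of~\eqref{eq:ob_statement} into $W_i\,\E[\mb b_i-\mb b_j]+\sum_{\gamma\in\Gamma}V_{i\gamma}\,\E\big[(\mb b_i-\mb b_j)(\mb b_\gamma-\mb b_i)\big]$. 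The first summand equals $W_i\Delta_{ij}$, which vanishes by hypothesis~(i).

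For the second summand, hypothesis~(i) also gives $\E[\mb b_i]=\E[\mb b_j]=\E[\mb b_\gamma]$ for all paths, so $\mb b_i-\mb b_j$ has zero mean and $\E\big[(\mb b_i-\mb b_j)(\mb b_\gamma-\mb b_i)\big]=\text{Cov}(\mb b_i-\mb b_j,\mb b_\gamma-\mb b_i)=K_{i\gamma}-K_{ii}-K_{j\gamma}+K_{ij}$ by bilinearity of the covariance, recalling that $K$ is the covariance matrix of $\mb b$. The $\gamma=i$ term of the sum is zero because its second factor vanishes. Invoking hypothesis~(ii), i.e.\ $K$ diagonal, together with $i\neq j$, the $\gamma=j$ term reduces to $-V_{ij}(K_{ii}+K_{jj})$ and each term with $\gamma\notin\{i,j\}$ reduces to $-K_{ii}V_{i\gamma}$. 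Hence the left-hand side of~\eqref{eq:ob_statement} equals $-V_{ij}(K_{ii}+K_{jj})-K_{ii}\sum_{\gamma\in\Gamma\setminus\{i,j\}}V_{i\gamma}$.

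To conclude, $K_{ii}$ and $K_{jj}$ are variances and hence non-negative, and by hypothesis~(iii) every off-diagonal entry of $V$ is non-negative, so $V_{ij}\geq0$ and $\sum_{\gamma\in\Gamma\setminus\{i,j\}}V_{i\gamma}\geq0$. Therefore the expression above is $\leq0$, so~\eqref{eq:ob_statement} holds for every $i,j$ in $\Gamma$, and Theorem~\ref{thm:opt_gen} yields $\pi^*=z^*$ and $C(f^{\pi^*})=C(f^*)$. The only point that requires care is the bookkeeping of the covariance expansion and the separation of the $\gamma\in\{i,j\}$ contributions from the remaining ones; once the diagonal structure of $K$ kills all cross-covariances and the constant $W_i\Delta_{ij}$ term drops out, the Metzler sign of $V$ makes the inequality immediate, so I do not expect a genuine obstacle. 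As a sanity check, specializing to $N$ parallel links gives $V_{i\gamma}=\alpha_\gamma\alpha_i/\sum_k\alpha_k\geq0$, so hypothesis~(iii) is automatic there and one recovers Corollary~\ref{cor:nbis}.
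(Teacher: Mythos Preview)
Your argument contains a genuine gap: you have misidentified $K$. In the paper, $K$ is defined by $K_{el}=\mathbb{E}[\Theta_e\Theta_l]-\mathbb{E}[\Theta_e]\mathbb{E}[\Theta_l]$ for $e,l\in\mathcal{E}$, i.e.\ it is the covariance matrix of the \emph{link}-level state $\Theta$, not of the \emph{path} free-flow times $\mb b=A'\Theta$. The covariance matrix of $\mb b$ is $A'KA$, whose $(p,q)$ entry equals $\sum_{e\in p\cap q}K_{ee}$ when $K$ is diagonal. This is not diagonal in general: any two paths sharing a link have correlated free-flow times. Consequently your reduction ``the $\gamma\notin\{i,j\}$ term becomes $-K_{ii}V_{i\gamma}$'' is unjustified, because the cross terms $\operatorname{Cov}(\mb b_i,\mb b_\gamma)$, $\operatorname{Cov}(\mb b_j,\mb b_\gamma)$ and $\operatorname{Cov}(\mb b_i,\mb b_j)$ need not vanish. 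Your sanity check on parallel links passes only because there $A=I$, so $\mb b=\Theta$ and the two covariance matrices coincide; this is precisely the degenerate case and does not probe the general statement.

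The paper's proof proceeds differently: it expands $\mathbb{E}[(\mb b_\gamma-\mb b_i)(\mb b_i-\mb b_j)]$ directly in terms of the link variances $K_{ee}$ and, by an inclusion--exclusion on which links lie in $i$, $j$, $\gamma$, shows that this quantity equals $-\sum_{e\in i,\,e\notin j\cup\gamma}K_{ee}-\sum_{e\in j\cap\gamma,\,e\notin i}K_{ee}\le 0$ for \emph{every} $\gamma$. Then the Metzler hypothesis $V_{i\gamma}\ge 0$ for $\gamma\neq i$ makes each summand of the second term in your decomposition nonpositive, and the result follows. To repair your argument you would need exactly this link-level computation; the shortcut through a diagonal path covariance is not available.
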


\begin{proof}
The proof is contained in Appendix \ref{app:metzler}.
\end{proof}\medskip

\begin{remark}\label{rem:Metzler}
Note that, as a consequence of \eqref{eq:z},
%\begin{remark}
%The matrix $V$ has some properties. Let $v = M\mathbf1$ and $V = vv' - (\mathbf1'M\mathbf1) M$.
%Notice that
%$$V \mathbf1 = (M \mathbf1 \mathbf1' M) \mathbf1 - (\mathbf1' M \mathbf1) M \mathbf1 = \mathbf0.$$
%Moreover, for most of the networks and assignments of $\alpha$, $V$ is a Metzler matrix. Indeed, it is possible to show that (as it is done in \eqref{eq:zi} in the proof of Theorem \ref{opt_gennet})
%\begin{equation*}\label{eq:int_theta}
%V_{ip} = \frac{\partial z_i^*}{\partial b_p}\,,
%\end{equation*}
%where $b=A'\theta$.
%This means that 
requiring that $V$ is Metzler is equivalent to requiring that the optimal flow on a path is non-decreasing in the free-flow delay of all alternative paths. This condition is trivially satisfied when the network has parallel links, as \eqref{eq:WV} shows, but does not hold true in general, as illustrated in the next section by an example.
\end{remark}

The three results established in this section provide sufficient conditions for optimality. These sufficient conditions become more restrictive as the network topology becomes more general. In particular, if the network has two parallel links, Corollary \ref{cor:2bis} states that optimality is guaranteed if the links have the same expected free-flow travel time. If the network has $N$ parallel links, Corollary \ref{cor:nbis} states that optimality is guaranteed if, additionally, the free-flow delay of all links are uncorrelated. Proposition \ref{prp:suff_general} states that, for arbitrary networks with injective link-path incidence matrix, optimality is guaranteed if the links have equal expected free-flow delay, the entries of $\Theta$ are uncorrelated and $V$ is a Metzler matrix. 
%\end{remark}

\section{Examples}\label{sec:examples}

This section is dedicated to the examples and has two purposes. First, the examples show that, if a sufficient condition in Corollary \ref{cor:nbis} and Proposition \ref{prp:suff_general} is not met, optimality might not be achievable by private signaling. Moreover, the examples help illustrating why these sufficient conditions are needed.
\addtocounter{ex}{-1}
\begin{ex}[continued]
Figure \ref{fig:cont} illustrates the cost of the optimal signaling rule $\pi^*$ as a function of $x$, in comparison with the optimal cost and the cost of the no information and full information rules. Note that, when $x=0$, the optimal rule achieves same performance of the no information and full information rules. This is due the fact that, when $x=0$, the network state is deterministic, hence the BWE coincides with the classical notion of Wardrop equilibrium for every signaling rule \cite{koessler2024correlated}. When $x \ge 3/5$, the optimal rule $\pi^*$ achieves the optimal cost. This is consistent with Corollary \ref{cor:2}, which states that optimality is achieved if and only if
$$x^2 = \sigma^2 \ge \frac 9{25}.$$
In fact, this example satisfies all the assumptions of Corollary~\ref{cor:2}.
When $x$ belongs to $(0,3/5)$, the optimal rule $\pi^*$ outperforms the no information and the full information rule and is suboptimal compared to the system-optimum flow. This regime is characterized by the fact that rule $\pi = f^*$ is not an obedient rule. Observe that, in absence of information or revealing full information the system cost is independent of $x$ due to the fact that in this example $\E[\Theta]$ does not depend on $x$. In contrast, when the planner discloses information privately according to the optimal rule $\pi^*$, the system cost is monotonically decreasing in $x$. This shows that the uncertainty is beneficial for the system, and can be exploited by an informed planner as a mechanism to reduce the network congestion.
\begin{figure}
	\centering
	\includegraphics[width=0.8 \linewidth]{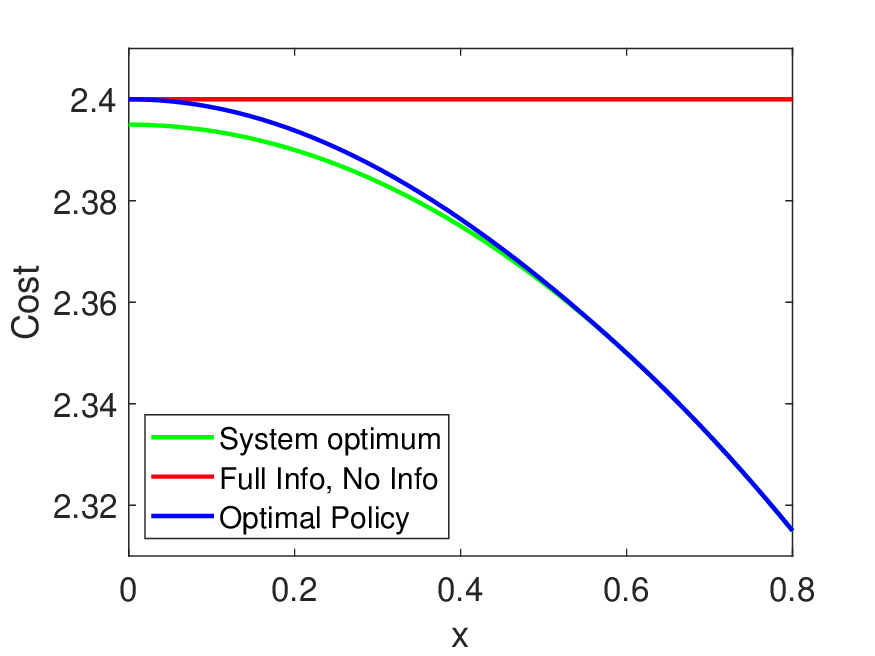}
	\caption{The cost of the system-optimum flow, the full information rule, the no information rule and the optimal rule in Example \ref{ex:1}.}
	\label{fig:cont}
\end{figure}
\end{ex}

\begin{ex}\label{ex:2vs3}
Consider a network with $\mc V=\{o,d\}$ and $|\mc E| = 3$ links from $o$ to $d$ with affine delay functions as per Assumption \ref{ass:linear}. Let $\alpha_1=\alpha_2=\alpha_3=1$, and assume that 
\be\label{eq:theta_ex2}
\Theta_1(\mb h) = c + \mb h, \quad \Theta_2(\mb h) = c + w \mb h, \quad \Theta_3(\mb h) = c\,, 
\ee
where $c,w>0$ are two positive numbers and $\mb h$ is a random variable with $\E[\mb h] = 0$ and $\E[\mb h^2] = \sigma^2 > 0$.
Observe that
\be\label{eq:DK_ex}
\Delta = \0, \quad 
K = \left(\begin{matrix} \sigma^2 & w \sigma^2 & 0 \\
w \sigma^2 & w^2 \sigma^2 & 0 \\
0 & 0 & 0\end{matrix}\right)\,.
\ee
While Corollary \ref{cor:2bis} states that $\Delta = \mb 0$ is a sufficient condition for optimality for networks with $2$ parallel links, this example shows that such condition is not sufficient when the network has $3$ or more parallel links. In particular, plugging \eqref{eq:DK_ex} into the obedience constraint \eqref{eq:optN_cor} with $i=1, j=2$ gets satisfied if and only if $w \ge 2$ or $w \le 1$. In plain words, users that receive message $\gamma_1$ under rule $\pi = f^*$ prefer path $\gamma_1$ over path $\gamma_2$ if and only if $w \ge 2$ or $w \le 1$, whereas if $w$ belongs to $(1,2)$ users that receive message $\gamma_1$ prefer to deviate to path $\gamma_2$ and optimality cannot be achieved. This fact has the following explanation. Assume that $w < 1$. Then,
%and let $h$ be the realization of $\mb h$, which in turn determines by \eqref{eq:theta_ex2} the network state realization $\theta(h)$ and the system-optimum flow $f^*(h)$. 
direct computation yields
$$
\mb h \ge 0 \implies \begin{cases}
\Theta_3(\mb h) \le \Theta_2(\mb h) \le \Theta_1(\mb h) \\
 f_3^*(\Theta(\mb h)) \ge f_2^*(\Theta(\mb h)) \ge f_1^*(\Theta(\mb h))
\end{cases}$$ 
$$
\mb h < 0 \implies \begin{cases}
\Theta_3(\mb h) > \Theta_2(\mb h) > \Theta_1(\mb h) \\ f_3^*(\Theta(\mb h)) < f_2^*(\Theta(\mb h)) < f_1^*(\Theta(\mb h)).
\end{cases}
$$ 
Let $\E_\gamma[\cdot]$ be the expected value of a random variable according to the posterior belief of users that received message $\gamma$. The equations above, together with Proposition \ref{prp:bayes}(ii), imply that the posterior belief of users that receive message $\gamma_1$ under rule $\pi = f^*$ is that $\E_1[\mb h] < 0$. Hence, in particular, $\E_1[\Theta_1(\mb h)] < \E_1[\Theta_2(\mb h)] < \E_1[\Theta_3(\mb h)]$, which make users that receive message $\gamma_1$ prefer path $\gamma_1$ over paths $\gamma_2$ and $\gamma_3$.
Instead, if $w > 1$, it holds
$$
\mb h >0 \implies \begin{cases}
\Theta_3(\mb h) < \Theta_1(\mb h) < \Theta_2(\mb h) \\ 
f_3^*(\Theta(\mb h)) > f_1^*(\Theta(\mb h)) > f_2^*(\Theta(\mb h))
\end{cases}
$$
$$
\mb h \le 0 \implies 
\begin{cases}
\Theta_3(\mb h) \ge \Theta_1(\mb h) \ge \Theta_2(\mb h) \\ f_3^*(\Theta(\mb h)) \le f_1^*(\Theta (\mb h)) \le f_2^*(\Theta(\mb h))\,.
\end{cases}
$$
From this inequality it seems that when $w>1$ the sign of $\E_1[\mb h]$ is unclear under rule $\pi = f^*$, so that users that receive message $\gamma_1$ are not able to order the links based on their posterior expected free-flow delay. However, by direct computation, the system optimum flow is
$$
f_1^*(\Theta(\mb h)) = \frac{2+(w-2)\mb h}{6}.
$$
Hence, when $w > 2$, $f_1^*(\Theta(\mb h))$ is increasing in $\mb h$, which implies by Proposition \ref{prp:bayes}(ii) that $\E_1[\mb h] > 0$. Since $w \ge 2$, this in turn implies by \eqref{eq:theta_ex2} that $\E_1[\Theta_1(\mb h)] \le \E_1[\Theta_2(\mb h)]$, hence users that receive message $\gamma_1$ prefer path $\gamma_1$ over path $\gamma_2$ and the obedience constraint \eqref{eq:optN_cor} with $i=1$ and $j=2$ is satisfied. In contrast, when $w$ belongs to $(1,2)$, similar considerations prove that $\E_1[\Theta_1(\mb h)] > \E_1[\Theta_2(\mb h)]$, which make users that receive message $\gamma_1$ deviate to path $\gamma_2$.

%I expect that in this case users prefer link $3$, i.e., $OB_{13}$ is violated. Indeed, $OB_{13}$ reads
%$$
%-2+w \le 0
%$$
%which is violated when $w \ge 2$, as expected. On the other hand, when $w < 2$, $f_1^*(\eta)$ is decreasing in $\eta$, suggesting that $\E_1[\eta] < 0$. Therefore, $\E_1[b_1] > \E_1[b_2]$, hence users of type $1$ prefer link $2$ over link $1$ and $OB_{12}$ is not satisfied.
\end{ex}

The previous example has shown that when the network has parallel links, the correlations of the network state entries may prevent optimality to be achieved. The next example shows that, for networks with non-parallel links and with injective link-path incidence matrix $A$, then the expected free-flow delay of all paths being equal and the entries of network state being uncorrelated may not be sufficient if $V$ is not Metzler.
\begin{ex}\label{ex3}
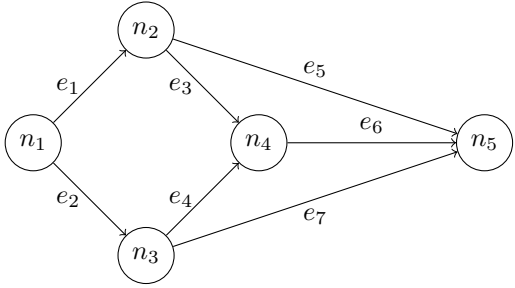
\begin{figure}
	\centering
	\begin{tikzpicture}
		\node [circle,draw] (1) at (0,0) {$n_1$};
		\node [circle,draw] (2) at (1.5,1.5)
		{$n_2$}; 
		\node [circle,draw] (3) at (1.5,-1.5) {$n_3$};
		\node [circle,draw] (4) at (3,0)
		{$n_4$}; 
		\node [circle,draw] (5) at (6,0)
		{$n_5$}; 
		
		\path (1) edge [->]
		node [left] {$e_{1}$} (2);
		\path (1) edge [->]
		node [left] {$e_{2}$} (3);
		\path (2) edge [->]
		node [left] {$e_{3}$} (4);
		\path (3) edge [->]
		node [left] {$e_{4}$} (4);
		\path (2) edge [->]
		node [above] {$e_{5}$} (5);
		\path (4) edge [->]
		node [above] {$e_{6}$} (5);
		\path (3) edge [->]
		node [below] {$e_{7}$} (5);
	\end{tikzpicture}
\caption{The network of Example \ref{ex3}. \label{fig:ex}}
\end{figure}

Consider the network in Figure \ref{fig:ex}, with
\begin{equation*}
\alpha = \bigg(\frac{1}{2},1,10,1,1,\frac{1}{2},1\bigg)\,.
\end{equation*}
Assume that $\Theta_e$ is deterministic for every $e$ in $\mc E \setminus \{e_4\}$, so that $K$ is diagonal with
\be\label{eq:K_example}
K_{44} = 0, \quad K_{ee} = 0 \ \ \forall e \in \mc E \setminus \{e_4\}\,,
\ee 
and let
\begin{equation}\label{eq:exp_b}
\ba{c}
\E[\Theta_2] = \E[\Theta_5] = \E[\Theta_6] = 1, \\[6pt]
\E[\Theta_1] = \E[\Theta_7] = \E[\Theta_4] = 4, \\[6pt]
\E[\Theta_3] = 0.
\ea
\end{equation}
%\be b_1 = \begin{cases}3.9 & \text{with prob. } 0.5 \\
%4.1 & \text{with prob. } 0.5\,, \qquad
%\end{cases} 
%b_4 = \begin{cases}2.95 & \text{with prob. } 0.5 \\
%	3.05 & \text{with prob. } 0.5\,.
%\end{cases} 
%\ee
Label the paths as
\be\label{eq:path_ex}
\begin{aligned}
\gamma_1 = (e_1,e_5),  \quad \gamma_2 = (e_1,e_3,e_6)\,, \\ 
\gamma_3 = (e_2,e_4,e_6),  \quad \gamma_4 = (e_2,e_7)\,,
\end{aligned}
\ee
and note that
\begin{equation}\label{eq:beta_i_ex}
\E[\mb b_\gamma] = 5\,, \quad \forall \gamma \in \Gamma\,,
\end{equation}
hence $\Delta = \mb 0$.
%\be
%K = \left(\ba{ccccccc}
%4 & 0 & 0 & 0 & 0 & 0 & 0 \\
%0 & 0 & 0 & 0 & 0 & 0 & 0 \\
%0 & 0 & 0 & 0 & 0 & 0 & 0 \\
%0 & 0 & 0 & 1 & 0 & 0 & 0 \\
%0 & 0 & 0 & 0 & 0 & 0 & 0 \\
%0 & 0 & 0 & 0 & 0 & 0 & 0 \\
%0 & 0 & 0 & 0 & 0 & 0 & 0 \\
%\ea\right)\,.
%\ee
By numerical computation,
\begin{equation*}\label{eq:theta}
V \simeq \left(\begin{matrix}
-0.1129 & 0.0953 & -0.0432 & 0.0609 \\
0.0953 & -0.1393 & 0.0806 & -0.0367 \\
-0.0432 & 0.0806 & -0.1063 & 0.0689 \\
0.0609 & -0.0367 & 0.0689 & -0.0931
\end{matrix}\right)\,.
\end{equation*} 
Observe that $V$ is not Metzler, since $V_{13} = V_{31} < 0$, so that conditions (i)-(ii) of Proposition \ref{prp:suff_general} are satisfied and condition (iii) is not.
%We now aim to find a distribution of $b$ with diagonal $K$ such that $z^*$ satisfies Assumption \ref{ass} and is no longer an obedient policy, so that $\theta$ being not Metzler would be the only assumption of Proposition \ref{prp:no_corr_arb} that is violated. In particular, for agents that receive message $3$, path $1$ is better that path $3$, namely, $OB_{31}$ is violated. To prove this, observe that
%\be\label{eq:K}
%K_{11} = \frac{1}{100}\,, \quad K_{44} = \frac{1}{400}\,, \quad K_{ee} = 0 \ \ \ \forall e \neq 1,4\,.
%\ee
The constraint \eqref{eq:ob_statement} with $i=1$, $j=3$ reads
%$$
%\!\!\! 
%\begin{array}{rcl}
%	& & \E\left[\left(\mb b_3 - \mb b_1\right)\left(W_3 + \sum_{p}V_{3p}(\mb b_p - \mb b_3)\right)\right] \\[6pt]
%	& = & \E[V_{31}(\mb b_3-\mb b_1)(\mb b_1-\mb b_3)]+ \\[6pt]
%	& + &\E[V_{32}(\mb b_3-\mb b_1)(\mb b_2-\mb b_3)]+ \\[6pt]
%	& + &\E[V_{34}(\mb b_3-\mb b_1)(\mb b_4-\mb b_3)] \\[6pt]
%	& = & - V_{31}K_{44} > 0
%	%& \simeq & 0.0432 / 80 - 0.0953 / 80 \le 0
%\end{array}
%$$
$$
\!\!\! 
\begin{array}{rcl}
	& & \E\left[\left(\mb b_1 - \mb b_3\right)\left(W_1 + \sum_{\gamma}V_{\gamma}(\mb b_\gamma - \mb b_1)\right)\right] \\[6pt]
	& = & \E[V_{12}(\mb b_1-\mb b_3)(\mb b_2-\mb b_1)] \ + \\[6pt]
	& + &\E[V_{13}(\mb b_1-\mb b_3)(\mb b_3-\mb b_1)] \ + \\[6pt]
	& + &\E[V_{14}(\mb b_3-\mb b_1)(\mb b_4-\mb b_1)] \\[6pt]
	& = & - V_{13}K_{44} > 0
	%& \simeq & 0.0432 / 80 - 0.0953 / 80 \le 0
\end{array}
$$
%$$
%\!\!\! 
%\begin{array}{rcl}
%& & \E\left[\left(\mb b_3 - \mb b_1\right)\left(W_3 + \sum_{p}V_{3p}(\mb b_p - \mb b_3)\right)\right] \\[6pt]
%& = &\E[V_{31}(\mb b_3-\mb b_1)(\mb b_1-\mb b_3)] \ + \\[6pt]
%& + &\E[V_{32}(\mb b_3-\mb b_1)(\mb b_2-\mb b_3)] \ + \\[6pt]
%& + &\E[V_{34}(\mb b_3-\mb b_1)(\mb b_4-\mb b_3)] \\[6pt]
%& = & -V_{31} K_{55} > 0\,,
%%& \simeq & 0.0432 / 80 - 0.0953 / 80 \le 0
%\end{array}
%$$
where the equalities follow from \eqref{eq:beta_i_ex}, \eqref{eq:path_ex} and \eqref{eq:K_example}, and the inequality from $V_{13}<0$. This means that, under rule $\pi = z^*$, users that receive message $\gamma_1$ prefer path~$\gamma_3$ over path~$\gamma_1$.
The interpretation for this fact is the following. Let $\E_\gamma[\cdot]$ be the expected value of a random variable according to the posterior belief of users that received message $\gamma$. Due to \eqref{eq:K_example}, $\E_1[\mb b_e] = \E[\mb b_e]$ for every $e$ in $\mathcal E \setminus \{e_4\}$. %Let $b=A'\theta$. 
Since $V_{13}<0$ and since link $e_4$ belongs to path $\gamma_3$ only, $V_{13}<0$ and Remark \ref{rem:Metzler} imply that $z_1^*(\Theta)$ is decreasing in $\mb \mb{\Theta}_4$. This, together with Proposition \ref{prp:bayes}(ii), implies that a user that receives message $\gamma_1$ updates her belief so that $\E_1[\Theta_4] \le \E[\Theta_4]$ and $$\E_1[\mb b_3] \le \E[\mb b_3] = \E[\mb b_\gamma] = \E_1[\mb b_\gamma]\,, \ \ \ \forall \gamma \in \Gamma \setminus \{\gamma_3\}.$$ 
For this reason, users that receive message $\gamma_1$ under rule $\pi = z^*$ have incentive in deviating to path $\gamma_3$. Therefore the rule $\pi = z^*$ is not obedient, which implies by Theorem \ref{rv} that no signaling rule can achieve the optimal cost.
\end{ex}

This examples has shown that, when the network has not parallel links, users that receive a recommendation may want to deviate to an alternative route, even if the free-flow delay of all paths is identical and the network state entries are uncorrelated. In particular, this can happen if worsening a path leads to a reduction of the system-optimum path flow over an alternative path, a fact that is related to the sign of the non-diagonal elements of matrix $V$.

In the final part of the section we discuss the challenges that relaxing Assumption \ref{ass:A} introduces to our problem. 
\begin{ex}
Consider the network of Figure \ref{fig:not_LI}. For this network, there exists a continuum of $z^*$ such that $f^* = Az^*$, due to the fact that $A$ is not injective. The system-optimum flow can be obtained as BWE for a deterministic fair signal policy with rule $\pi$ if at least a signaling rule $\pi = z^*$ is obedient, but understanding which of these rules should be tested is in general hard problem. Moreover, the planner might also add the garbling step, i.e., randomizing the fraction of agents that receive each message $\pi=z^*$, (cf. \cite{massicot2022competitive,koessler2024correlated}) to obtain network flows that cannot be obtained by signaling rules that are deterministic functions of the network state realization, as considered in this paper. Therefore, for networks that do not satisfy Assumption \ref{ass:A}, it may be needed to generalize the notion of signaling rule to allow for such randomization, which is left for future research.
\end{ex}

\begin{figure}
	\centering
	\begin{tikzpicture}
		\node [circle,draw] (1) at (0,0) {$o$};
		\node [circle,draw] (2) at (2,0)
		{$n$}; 
		\node [circle,draw] (3) at (4,0) {$d$};
		
		\path[->] (1) edge [bend left] node {} (2);
		\path[->] (1) edge [bend right] node {} (2);
		\path[->] (2) edge [bend left] node {} (3);
		\path[->] (2) edge [bend right] node {} (3);
	\end{tikzpicture}
	\caption{A network that does not satisfy Assumption \ref{ass:A}. \label{fig:not_LI}}
\end{figure}
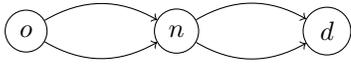

\section{Conclusion}\label{sec:conclusion}
In this paper we considered an information design problem where an omniscient planner observes the realization of a random network state and share private signals to the users to correlate their behavior with the network state and minimize the cost of the resulting equilibrium flow. In particular, we study when the optimal cost is achievable by a fair private signal policy. We show that it is without loss of generality to restrict the attention to direct obedient signaling rules, namely, the set of messages coincides with path recommendations such that the users have no incentive in deviating to alternative paths. We then provide sufficient and necessary conditions for optimality for networks with injective link-path adjacency matrix. Since these conditions are hard to interpret, we then provide more restrictive conditions for optimality that depend on the network topology, showing that stronger conditions are required as the network topology becomes more general. Future research lines aim to extend the analysis for network with non-injective adjacency matrices, by allowing the planner to add an additional level of randomness (i.e., adding the garbling step, in the language of \cite{massicot2025strategic}), as well as considering more general setting whereby multiple information providers compete for customers.

\section*{References}

\bibliographystyle{IEEEtran}
%\nocite{*}
\bibliography{references.bib}

\appendices
\section{Proofs}

\subsection{Proof of Proposition \ref{prp:pot}}\label{app:A}
Observe that $\Phi_\pi(y)$ is convex because the delay functions are increasing in the flow and $f^{\pi,y}$ is linear in $y$ (cf. \eqref{eq:flow_y}), hence $\Phi_\pi(y)$ is the composition of convex functions. Therefore, $y$ is a solution of \eqref{eq:y} if and only it satisfies the KKT conditions. Since $y$ must satisfy constraints \eqref{eq:con:y}, the KKT conditions are the following: for every two nodes $o,d$ in $\mc V$, $i$ in $\Gamma_{od}$, $m$ in $\mathcal M$ such that $y^{od}_{im}>0$, it must hold true
\be\label{eq:optimal_y}
\frac{\partial \Phi_\pi(y)}{\partial y_{jm}^{od}} -  \frac{\partial \Phi_\pi(y)}{\partial y_{im}^{od}} \ge 0\,, \quad \forall j \in \Gamma_{od}\,. 
\ee
Plugging \eqref{eq:pop_game} into \eqref{eq:optimal_y} and using \eqref{eq:cost_path} we conclude that a necessary and sufficient conditions for optimality is that
$$
\int_{\R_+^{\mc E}} (c_i(\theta, f^{\pi,y}(\theta))-c_j(\theta,f^{\pi,y}(\theta)) \pi_m^{od} (\theta) d\P(\theta) \leq 0\,,
$$
for every $j$ in $\Gamma_{od}$,
which is the definition of BWE (cf. Definition \ref{def:bue}). 
The uniqueness of the BWE follows from the strict monotonicity of the delay functions.

%%%%%%%%%%%%%%%%%%%%%%%%%%%%%%%%%%%%%%%%%%%%%%%%%%%%%%%%%%%%%%%%%%
\subsection{Proof of Proposition \ref{prp:no_full}}\label{app:linear}
	Consider the full support $y^{FI}$ such that 
	\be\label{eq:full_y}
	(y^{FI})_{i\theta}^{od}>0\, \quad \forall o,d \in \mc V\,, \forall i \in \Gamma_{od}\,, \forall \theta \in \text{supp}(\Theta)
	\ee
	and the associated full information BWE as per \eqref{eq:flow_y}, denoted $f^{\pi^{FI}}$ in short.
	The equilibrium condition \eqref{eq:full_info} and \eqref{eq:full_y} together imply that $$c_i(\theta,f^{\pi^{FI}}(\theta)) = c_j(\theta,f^{\pi^{FI}}(\theta))$$ 
	for every two nodes $o,d$ in $\mc V$, $i,j$ in $\Gamma_{od}$ and $\theta$ in $\text{supp}(\Theta)$. This is equivalent to assume the existence of a function $\lambda: \R_{+}^{\mc E} \to \R_+^{\mc V \times \mc V}$ such that
$$
c_\gamma(\theta,f^{\pi^{FI}}(\theta)) = %(A'[\alpha]^{-1}Az(\theta) + A'\theta)_i = 
\lambda_{od}(\theta)\,, \quad \forall \theta \in \text{supp}(\Theta), \gamma \in \Gamma_{od}\,.$$
Note that $\lambda(\theta)$ is linear because of Assumption~\ref{ass:linear}.
Hence, the cost of the full information BWE is
$$
\begin{aligned}
	& C(f^{\pi^{FI}}) \\[4pt]
	= & \ds \int_{\R_{+}^{\mc E}}\sum_{e \in \mc E} f_e^{\pi^{FI}}(\theta) \tau_e(\theta_e,f_e^{\pi^{FI}}(\theta)) d\P(\theta) \\[4pt]
	= &\ds \int_{\R_{+}^{\mc E}}\sum_{e \in \mc E} \sum_{o,d \in \mc V} \sum_{\gamma \in \Gamma_{od}} (y^{FI})^{od}_{\gamma\theta} \ups_{od} A_{e\gamma} \tau_e(\theta_e,f_e^{\pi^{FI}}(\theta)) d\P(\theta) \\[4pt]
	= & \ds \int_{\R_{+}^{\mc E}} \sum_{o,d \in \mc V} \sum_{\gamma \in \Gamma_{od}} c_i^{od}(\theta,f^{\pi^{FI}}(\theta)) (y^{FI})^{od}_{\gamma\theta} \ups_{od} d\P(\theta) \\[4pt]
	= & \ds \int_{\R_{+}^{\mc E}} \sum_{o,d \in \mc V} \lambda_{od}(\theta) \sum_{\gamma \in \Gamma_{od}} (y^{FI})^{od}_{\gamma\theta} \ups_{od} d\P(\theta) \\[6pt]
	= & \ds \int_{\R_{+}^{\mc E}} \sum_{o,d \in \mc V} \lambda_{od}(\theta) \ups_{od} d\P(\theta) \\[4pt]
	= & \sum_{o,d \in \mc V} \ups_{od} \lambda_{od}(\E[\Theta])
	%	& = &\ds \int_{\mathbf\Theta}\sum_\gamma \sum_{i \in \mc P_\gamma} (A'[\alpha]^{-1}Az(\theta) + A'\theta)_i z_i(\theta) d\P(\theta) \\[6pt]
	%	& = &  \ds \int_{\mathbf\Theta}\sum_\gamma \lambda_\gamma(\theta) \sum_{i \in \mc P_\gamma} z_i(\theta) d\P(\theta) \\[6pt]
	%	& = &\ds \int_{\mathbf\Theta}\sum_\gamma \lambda_\gamma(\theta) v_\gamma d\P(\theta) \\[6pt]
	%	& = &\ds \sum_\gamma \lambda_\gamma(\E[\Theta]) v_\gamma \\[6pt]
\end{aligned}
$$
where the second equality follows from \eqref{eq:full_info_flow}, the third one from \eqref{eq:cost_path}, the penultimate one from \eqref{eq:con:y} and the last one from the linearity of $\lambda_\gamma(\theta)$.
Consider now the no information rule $\pi^{NI}$. Note from \eqref{eq:no_info_eq} that under Assumption \ref{ass:linear} the no information BWE $f^{\pi^{NI}}(\theta)$ depends on $\E[\Theta]$ but does not depend on the network state realization $\theta$. 
%For this reason, with a slight abuse of notation, from now one we shall denote such equilibrium by $f^{\pi^{NI}}(\E[\Theta])$. 
For this reason, from now one we shall denote such equilibrium by $f^{\pi^{NI}}$ omitting the dependence on $\theta$.
Moreover, it follows from \eqref{eq:no_info_eq} and \eqref{eq:full_info} that
\be\label{eq:full_no}
f^{\pi^{FI}}(\E[\Theta]) = f^{\pi^{NI}}\,.
%(\E[\Theta])\,.
\ee
Since there exists $y^{NI}$ that satisfies \eqref{eq:y} for the rule $\pi^{NI}$ and such that $(y^{NI})^{od}_{\gamma m^*}>0$ for every two nodes $o,d$ in $\mc V$ and $\gamma$ in $\Gamma_{od}$, the equilibrium condition \eqref{eq:no_info_eq} is equivalent to $$c_i(\E[\Theta],f^{\pi^{NI}}) = c_j(\E[\Theta],f^{\pi^{NI}})$$ 
for every two nodes $o,d$ in $\mc V$ and two paths $i,j$ in $\Gamma_{od}$. This in turn is equivalent by \eqref{eq:full_no} to
\be\label{eq:lambda_ni}
\!\!\! c_\gamma(\E[\Theta],f^{\pi^{NI}}) = c_\gamma(\E[\Theta],f^{\pi^{FI}}(\E[\Theta])) =
\lambda_{od}(\E[\Theta])\,,
\ee
for every two nodes $o,d$ in $\mc V$ and $\gamma$ in $\Gamma_{od}$. Hence,
$$
\begin{aligned}
	& C(f^{\pi^{NI}}) \\[4pt] 
	= & \ds \int_{\R_+^{\mc E}}\sum_{e \in \mc E} f_e^{\pi^{NI}} \tau_e(\theta_e,f_e^{\pi^{NI}}) d\P(\theta) \\[4pt]
	= & \ds \int_{\R_+^{\mc E}}\sum_{e \in \mc E} \sum_{o,d \in \mc V} \sum_{\gamma \in \Gamma_{od}} (y^{NI})^{od}_{\gamma m^*} \ups_{od} A_{e\gamma} \tau_e(\theta,f_e^{\pi^{NI}}) d\P(\theta) \\[4pt]
	= & \ds \int_{\R_+^{\mc E}} \sum_{o,d \in \mc V} \sum_{\gamma \in \Gamma_{od}} c_\gamma(\theta,f^{\pi^{NI}}) (y^{NI})^{od}_{\gamma m^*} \ups_{od} d\P(\theta) \\[4pt]
	= & \sum_{o,d \in \mc V} \sum_{\gamma \in \Gamma_{od}} c_\gamma(\E[\Theta],f^{\pi^{NI}}) (y^{NI})^{od}_{\gamma m^*} \ups_{od} \\[4pt]
	= & \sum_{o,d \in \mc V} \lambda_{od}(\E[\Theta]) \sum_{\gamma \in \Gamma_{od}} (y^{NI})^{od}_{\gamma m^*} \ups_{od} \\[4pt]
	= & \sum_{o,d \in \mc V} \ups_{od} \lambda_{od}(\E[\Theta]) = C(f^{\pi^{FI}})\,.
\end{aligned}
$$
where the second equality follows from \eqref{eq:f_no_info}, the third one from \eqref{eq:cost_path}, the fourth one from the linearity of the delay functions, the penultimate one from \eqref{eq:lambda_ni} and the last one from \eqref{eq:con:y}.
This concludes the proof.

\subsection{Proof of Proposition \ref{prp:suff_general}}\label{app:metzler}
Note that $\E[(\mb b_i - \mb b_j)W_i] = W_i\E[\Delta_{ij}] = 0$ because $\Delta = \0$. Hence, by Theorem \ref{thm:opt_gen}, $\pi = z^*$ is obedient if and only if
\be\label{eq:ob_arb}
\E\Big[\sum_{\gamma \in \Gamma}V_{i\gamma}(\mb b_\gamma - \mb b_i)(\mb b_i - \mb b_j)\Big] \le 0\,.
\ee
For a link $e$ in $\mc E$ and a path $p$ in $\Gamma$, we say that $e \in p$ if $A_{ep} = 1$. Hence,
\begin{equation*}\label{eq:b_path}
\ba{rcl}
\E[\mb b_p\mb b_q] & = & \ds\E\Big[\Big(\sum_{m \in p}\Theta_m\Big)\Big(\sum_{n \in q}\Theta_n\Big)\Big]\\[10pt]
& = & \ds\sum_{m \in p}\sum_{n \in q}\E[\Theta_m \Theta_n]\\[10pt]
& = & \ds \sum_{m \in p}\sum_{n \in q} \E[\Theta_m]\E[\Theta_n] + \sum_{\substack{e \in \mathcal E: \\ e \in p \cap q}} K_{ee}\\[10pt]
& = & \ds \sum_{m \in p}\E[\Theta_m]\sum_{n \in q}\E[\Theta_n] + \sum_{\substack{e \in \mathcal E: \\ e \in p \cap q}} K_{ee}\\[10pt]
& = &\ds \E[\mb b_p]\E[\mb b_q] + \sum_{\substack{e \in \mathcal E: \\ e \in p \cap q}} K_{ee}\,,
\ea
\end{equation*}
where the third equivalence follows from the fact that $K$ is a diagonal matrix.
Using this expression and $\Delta = \0$,
\be\label{eq:diff_b}
\!\!\!\!\!\!\!\ba{rcl}
& & \E[(\mb b_p-\mb b_i)(\mb b_i-\mb b_j)] \\[8pt] 
& = & \ds \sum_{\substack{e \in \mathcal E: \\ e \in i \cap p}} K_{ee} - \sum_{\substack{e \in \mathcal E: \\ e \in i}} K_{ee} - \sum_{\substack{e \in \mathcal E: \\ e \in j \cap p}} K_{ee} + \sum_{\substack{e \in \mathcal E: \\ e \in i \cap j}} K_{ee},
\ea
\ee
Moreover,
$$
\ba{rcl}
\ds \sum_{\substack{e \in \mathcal E: \\ e \in i}} K_{ee}- \sum_{\substack{e \in \mathcal E: \\ e \in i \cap p}} K_{ee} & = & \ds \sum_{\substack{e \in \mathcal E: \\ e \in i \\ e \notin p}} K_{ee}\,, \\[30pt]
\ds \sum_{\substack{e \in \mathcal E: \\ e \in i \cap j}} K_{ee} - \sum_{\substack{e \in \mathcal E: \\ e \in j \cap p}} K_{ee} & = & \ds \sum_{\substack{e \in \mathcal E: \\ e \in i \cap j \\ e \notin p}} K_{ee} - \sum_{\substack{e \in \mathcal E: \\ e \in j \cap p \\ e \notin i}} K_{ee}\,.
\ea$$
Plugging these two expressions into \eqref{eq:diff_b} we get
\begin{equation*}\label{eq:quad_beta}
\begin{aligned}
\E[(\mb b_p-\mb b_i)(\mb b_i-\mb b_j)] &= -\sum_{\substack{e \in \mathcal E: \\ e \in i \\ e \notin p}} \! K_{ee} + \sum_{\substack{e \in \mathcal E: \\ e \in i \cap j \\ e \notin p}} \!\! K_{ee} - \sum_{\substack{e \in \mathcal E: \\ e \in j \cap p \\ e \notin i}} \!\! K_{ee} \\
&= - \sum_{\substack{e \in \mathcal E: \\ e \in i \\ e \notin j \cup p}} K_{ee} - \sum_{\substack{e \in \mathcal E: \\ e \in j \cap p \\ e \notin i}} K_{ee} \\
&\le 0,
\end{aligned}
\end{equation*}
where the last inequality follows from the fact that the diagonal of $K$ is non-negative by construction. The statement is proved by plugging this equation into \eqref{eq:ob_arb} and using the fact $V$ is Metzler.

\begin{IEEEbiography}[{\includegraphics[width=1in,height=1.25in,clip,keepaspectratio]{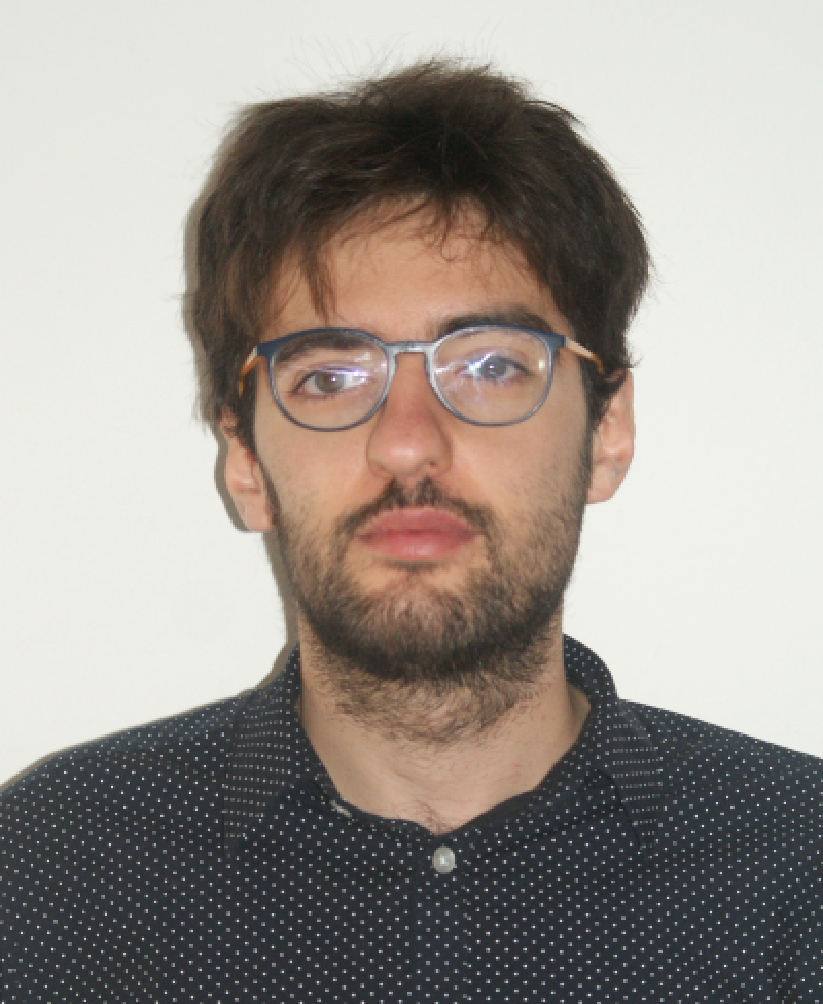}}]{Leonardo Cianfanelli} (M'21) received the B.Sc. (cum Laude) in physics and astrophysics in 2014 from Università degli Studi di Firenze, Italy, the M.S. in physics of complex systems (cum Laude) in 2017 from Università di Torino, Italy, and the PhD in pure and applied mathematics in 2022 from Politecnico di Torino. He is currently an assistant professor at the Department  of  Mathematical Sciences, Politecnico di Torino, Italy. He was a Visiting Student at Laboratory for Information and Decision Systems, Massachusetts Institute of Technology, Cambridge, MA, USA, in 2018-2020 and then research assistant at Politecnico di Torino, Italy, in 2021-2025. His research focuses on dynamics and control in network systems, with applications to transportation networks, epidemics.
\end{IEEEbiography}

\begin{IEEEbiography}[{\includegraphics[width=1in,height=1.25in,clip,keepaspectratio]{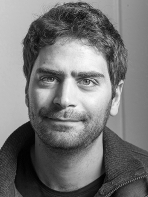}}]
	{Giacomo Como}(M'12) is  a  Professor at  the Department  of  Mathematical  Sciences,  Politecnico di  Torino,  Italy,  and a Senior Lecturer  at  the  Automatic  Control  Department, Lund  University,  Sweden.  He  received the B.Sc., M.S., and Ph.D.~degrees in Applied Mathematics  from  Politecnico  di  Torino,  in  2002,  2004, and 2008, respectively. He was a Visiting Assistant in  Research  at  Yale  University  in  2006--2007  and  a Postdoctoral  Associate  at  the  Laboratory  for  Information  and  Decision  Systems,  Massachusetts  Institute of Technology, from 2008 to 2011. Prof.~Como currently serves as Senior Editor for the \textit{IEEE Transactions on Control of Network Systems}, as Associate  Editor  for \textit{Automatica}, and as the  chair  of the  {IEEE-CSS  Technical  Committee  on  Networks  and  Communications}. He served as Associate Editor for the  \textit{IEEE Transactions on Network Science and Engineering} (2015-2021) and for the \textit{IEEE Transactions on Control of Network Systems} (2016-2022).  He was  the  IPC  chair  of  the  IFAC  Workshop  NecSys'15  and  a  semiplenary speaker  at  the  International  Symposium  MTNS'16.  He  is  recipient  of  the 2015  George S. ~Axelby  Outstanding Paper Award.  His  research interests  are in  dynamics,  information,  and  control  in  network  systems  with  applications to  infrastructure,  social,  and  economic networks.
\end{IEEEbiography}

\begin{IEEEbiography}[{\includegraphics[width=1in,height=1.25in,clip,keepaspectratio]{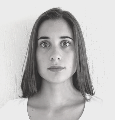}}]{Alexia Ambrogio} (S'25) received the B.Sc. in Mathematics for Engineering in 2020 from Politecnico di Torino, Italy, and the M.S. in Mathematical Engineering in 2023 from Politecnico di Torino, Italy. She is currently a PhD student in Mathematical Sciences at Politecnico di Torino, Italy. She was a Visiting Student at Gipsa-Lab, University of Grenoble Alpes, France, in 2025. Her research focuses on network dynamics and game theory, applied to transportation networks.
\end{IEEEbiography}

\end{document}